\newcommand{\R}{\mathbb{R}}
\newcommand{\N}{\mathbb{N}}
\newcommand{\Z}{\mathbb{Z}}
\newcommand{\T}{\mathbb{T}}
\newcommand{\RRn}{{\mathbb{R}^n}}
\newcommand{\RRd}{{\mathbb{R}^d}}
\newcommand{\OT}{{[0,T]}}
\renewcommand{\div}{\mathrm{div}}
\newcommand{\grad}{\nabla}
\newcommand{\TT}{\mathbb{T}}
\newcommand{\TTd}{{\TT^d}}
\DeclareMathOperator{\EE}{\mathbb{E}}
\DeclareMathOperator{\PP}{\mathbb{P}}
\newcommand{\cF}{{\mathcal F}}
\newcommand{\cA}{{\mathcal A}}
\newcommand{\cC}{{\mathcal C}}
\newcommand{\cH}{{\mathcal H}}
\newcommand{\cP}{{\mathcal P}}
\newcommand{\cK}{{\mathcal K}}
\newcommand{\cQ}{{\mathcal Q}}
\newcommand{\ds}{\displaystyle}
\newcommand{\beq}{\begin{equation}}
\newcommand{\beqa}{\begin{eqnarray}}
\newcommand{\bea} {\begin{array}{ll}}
\newcommand{\beqan}{\begin{eqnarray*}}
\newcommand{\eeq}{\end{equation}}
\newcommand{\eeqa}{\end{eqnarray}}
\newcommand{\eeqan}{\end{eqnarray*}}
\newcommand{\eea} {\end{array}}
\newtheorem{theorem}{Theorem}
\newtheorem{corollary}{Corollary}
\newtheorem{lemma}[theorem]{Lemma}
\newtheorem{proposition}{Proposition}
\theoremstyle{definition}
\newtheorem{remark}{Remark}
\numberwithin{equation}{section}
\title[Mean field type control]{On the system of partial differential equations arising in mean field type control}
\author[Yves Achdou and  Mathieu Lauri{\`e}re ]{}
\subjclass{Primary: 49J20; Secondary: 35K55.}
\keywords{mean field type control, existence and uniqueness}
\email{achdou@ljll.univ-paris-diderot.fr}
\email{mathieu.lauriere@gmail.com}
\begin{document}
\begin{abstract}
We discuss the system of Fokker-Planck and Hamilton-Jacobi-Bellman equations arising
from the finite horizon control of McKean-Vlasov dynamics. We give examples of
 existence and uniqueness results. Finally, we propose some simple models for the motion of pedestrians and
 report about  numerical simulations in which  we compare
 mean filed games and mean field type control.
\end{abstract}

\maketitle
\centerline{\scshape Yves Achdou }
\medskip
{\footnotesize
\centerline{Universit{\'e} Paris Diderot}
\centerline{Laboratoire Jacques-Louis Lions, UMR 7598, UPMC, CNRS}
\centerline{Sorbonne Paris Cit{\'e} F-75205 Paris, France}
}
\medskip
\centerline{\scshape Mathieu Lauri{\`e}re }
\medskip
{\footnotesize
\centerline{Universit{\'e} Paris Diderot}
\centerline{Laboratoire Jacques-Louis Lions, UMR 7598, UPMC, CNRS}
\centerline{Sorbonne Paris Cit{\'e} F-75205 Paris, France}
}

\bigskip
\centerline{(Communicated by the associate editor name)}

\section{Introduction}
In the recent years,  an important research activity has been devoted to the study of stochastic differential games  with a large number of players.
In their pioneering articles  \cite{MR2269875,MR2271747,MR2295621}, J-M. Lasry and P-L. Lions have introduced the notion of mean field games,
which describe the   asymptotic behavior of  stochastic differential games (Nash equilibria) as the number $N$ of players
tends to infinity.  In  these models, it is assumed that the agents are all identical and that
an individual agent can hardly influence the outcome of the game.  Moreover, each individual strategy is influenced by some averages of functions of
the states of the other agents. In the limit when $N\to +\infty$, a given agent feels the presence of the other agents through the
statistical distribution of the states of the other players. Since perturbations of a single agent's strategy does not influence the statistical distribution of the states,
the latter acts as a parameter  in the control problem to be solved by each  agent.
\\
Another kind of asymptotic regime is obtained by  assuming that all the agents use the same distributed feedback strategy
 and by passing  to the limit as $N\to \infty$ before optimizing the common feedback. Given a common feedback strategy, the asymptotics are
given by the McKean-Vlasov theory, \cite{MR0221595,MR1108185} : the dynamics of a given agent is found by solving  a stochastic differential equation
with coefficients depending on
a mean field, namely the statistical distribution of the states, which may also affect the objective function. Since the feedback strategy is common to all agents,
perturbations of the latter affect the mean field.  Then, having each player optimize its objective function amounts to solving a control problem
 driven by the McKean-Vlasov dynamics. The latter is named control of McKean-Vlasov dynamics by R. Carmona and F. Delarue \cite{MR3045029,MR3091726} and mean field type control by A. Bensoussan et al, \cite{MR3037035,MR3134900}.
\\
When the dynamics of the players are independent stochastic processes, both mean field games and control of  McKean-Vlasov dynamics
naturally lead to a coupled system of partial differential equations, a forward Fokker-Planck equation  (which may be named  FP equation in the sequel)
 and a backward Hamilton-Jacobi--Bellman equation (which may be named  HJB equation).
For mean field games, the coupled system of partial differential equations has been studied by Lasry and Lions in  \cite{MR2269875,MR2271747,MR2295621}. Besides, many important aspects of the mathematical theory developed by  J-M. Lasry and P-L. Lions on MFG are not published in journals or books, but can be found in the videos of  the lectures of P-L. Lions at Coll{\`e}ge de France: see the web site of Coll{\`e}ge de France, \cite{PLL}. One can also see \cite{MR3195844} for a brief survey. \\
In the present paper, we aim at studying the system of partial differential equations arising in mean field type control, when the horizon of the control problem is finite: we will discuss the existence and the uniqueness of classical solutions. In the last paragraph of the paper,
we briefly discuss some numerical simulations in the context of motion of pedestrians, and we compare the results obtained with  mean field games and with mean field type control.
\subsection{Model and assumptions}
For simplicity, we assume that all the functions used below (except in \S~\ref{sec:some-simulations})
 are periodic with respect to the state variables $x_i$, $i=1,\dots, d$,
of period $1$ for example. This will save technical arguments on either problems in unbounded domains or boundary conditions.
 We denote by $\T^d$ the $d-$dimensional unit torus: $\T^d=\R^d/\Z^d$.
Let $\PP$ be the set of probability measures on $\T^d$ and  $\PP\cap L^1(\T^d)$ be the set of probability measures which are absolutely continuous
 with respect to the Lebesgue measure. For $m\in \PP\cap L^1(\T^d)$, the density of $m$ with respect to the Lebesgue measure
will be still be noted $m$,
i.e. $dm(x)=m(x)dx$.
\\
Let $g$ be a map from $\PP$ to a subset of  $\cC^1(\T^d \times \R^n; \R^d)$
( the image of $m\in \PP$ will be noted $g[m]\in   \cC^1(\T^d \times \R^n; \R^d)\;$)
such that
\begin{itemize}
\item  there exists a constant $M$ such that
  for all $m\in \PP$  and $x\in \T^d$,   $|g[m](x,0)| \le M$
\item there exists a constant $L$ such  that
  \begin{itemize}
  \item for all $m\in \PP$, $ a\in \R^n$ and $x,y\in \T^d$,   $|g[m](x,a)-g[m](y,a)| \le Ld(x,y)$
where $d(x,y)$ is the distance between $x$ and $y$ in $\T^d$.
  \item for all $m\in \PP$, $ a,b\in \R^n$ and $x\in \T^d$, $|g[m](x,a)-g[m](x,b)| \le L|a-b|$
  \item for all $m, m' \in \PP$, $ a\in \R^n$ and $x\in \T^d$, $|g[m](x,a)-g[m'](x,a)| \le L d_2(m,m')$ where $d_2$ is the Wasserstein distance:
    \begin{displaymath}
      \begin{split}
        & d_2(m,m')\equiv \inf_{ \gamma\in \Gamma(m,m')}  \left( \int_{\T^d\times \T^d} d^2(x,y) d\gamma(x,y)\right)^{\frac 1 2},        \\
        & \Gamma(m,m')\equiv \Bigl\{  \gamma :\hbox{ transport plan between $m$ and  $m'$}\Bigr\},
      \end{split}
    \end{displaymath}
    and a transport plan $\gamma$ between $m$ and $m'$ is a   Borel probability measure on $\T^d\times \T^d$ such that, for all  Borel subset $E$ of $\T ^d$,
    \begin{displaymath}
      \gamma(E\times \T^d)=m(E)\quad \hbox{and}\quad \gamma(\T^d \times E)=m'(E).
    \end{displaymath}
  \end{itemize}
  \item there exists a map $\tilde g$ from $L^1 (\T^d)$ to  $\cC^1(\T^d \times \R^n; \R^d)$
    such that  $g|_{\PP\cap L^1(\T^d)}= \tilde g|_{\PP\cap L^1(\T^d)}$ and that  for any $x\in \T^d$ and $ a\in \R^n$,
    $m\to \tilde g[m](x,a)$ is Fr{\'e}chet differentiable in $L^1 (\T^d)$ and $(x,a)\mapsto \frac {\partial \tilde g}{\partial m} [m](x,a)$
belongs to \\ $\cC^1(\T^d\times\R^n;  L^\infty(\T^d; \R^d))$.
    Hereafter, we will not make the distinction between $g$ and $\tilde g$.
\end{itemize}
Consider a probability space $(\Omega, \cA, \cP)$ and a filtration $\cF^t$ generated by a $d$-dimensional standard Wiener process $(W_t)$ and
the stochastic process $(X_t)_{t\in\OT}$   in $\R^d$ adapted to $\cF^t$ which solves the stochastic differential equation
\begin{equation}\label{eq:1}
  d X_t = g[m_t]( X_t, a_t) \;d t + \sqrt{2\nu} \;d W_t \qquad \forall t\in\OT,
\end{equation}
given the initial state $X_0$ which is a random variable $\cF^0$-measurable whose probability density is noted $m_0$.
In (\ref{eq:1}),  $\nu$ is a positive number, $m_t$ is the probability distribution of $X_t$ and $a_t$ is the control which we take to be
\begin{equation}\label{eq:2}
a_t=v(t,X_t), \end{equation}
where $v(t,\cdot)$ is a continuous function on $\T^d$. To the pair $(v, m)$, we associate the objective
\begin{equation}
  \label{eq:3}
  \begin{split}
    J(v, m) := &\EE\left[ \int_0^T f[m_t]( X_t, a_t) dt  + h[m_T](X_T)\right]
  \end{split}
\end{equation}
where
$f$ (resp. $h$) is a  map from $\PP$ to a subset of  $\cC^1(\T^d \times \R^n)$, resp. to a  subset of  $\cC^1(\T^d)$.
We assume that
\begin{itemize}
\item $\lim_{|a|\to \infty} \inf_{m\in \PP, x\in \T^d} \frac {f[m](x,a)}{|a|}=+\infty$
\item there exists a map $\tilde f$ from $L^1 (\T^d)$ to  $\cC^1(\T^d \times \R^n)$
    such that  $f|_{\PP\cap L^1(\T^d)}= \tilde f|_{\PP\cap L^1(\T^d)}$ and that  for any $x\in \T^d$ and $ a\in \R^n$,
    $m\to \tilde f[m](x,a)$ is Fr{\'e}chet differentiable in $L^1 (\T^d)$ and $(x,a)\mapsto \frac {\partial \tilde f}{\partial m} [m](x,a)$ belongs to
 $\cC^1(\T^d\times\R^n;  L^\infty(\T^d))$.
   Hereafter, we will not make the distinction between $f$ and $\tilde f$.
\end{itemize}
We also assume that  there exists a map $\tilde h$ from $L^1 (\T^d)$ to  $\cC^1(\T^d)$
    such that  $h|_{\PP\cap L^1(\T^d)}= \tilde h|_{\PP\cap L^1(\T^d)}$ and that  for any $x\in \T^d$,
    $m\to \tilde h[m](x)$ is Fr{\'e}chet differentiable in $L^1 (\T^d)$ and $x\mapsto \frac {\partial \tilde h}{\partial m} [m](x)$
 belongs to $\cC^1(\T^d;  L^\infty(\T^d))$.
   Hereafter, we will not make the distinction between $h$ and $\tilde h$.
\\
It will be useful to define the Lagrangian and Hamiltonian as follows:
for any $x\in \T^d$, $a\in \R^n$ and $p\in \R ^d$,
\begin{align*}
	 L[m](x,a,p) &:= f[m]( x,a)+ p\cdot g[m](x,a)\\
	 H[m](x,p) &:= \min_{a\in\RRn} L[m](x,a,p).
\end{align*}
where $ p \cdot q$ denotes the scalar product in $\R^d$.\\
It is consistent with the previous assumptions to suppose that
\begin{itemize}
\item there  exists a map $\tilde H$ from $L^1 (\T^d)$ to  $\cC(\T^d \times \R^d)$
    such that  $H|_{\PP\cap L^1(\T^d)}= \tilde H|_{\PP\cap L^1(\T^d)}$ and that  for any $x\in \T^d$ and $ p\in \R^d$,
    $m\to \tilde H[m](x,p)$ is Fr{\'e}chet differentiable in $L^1 (\T^d)$ and $(x,p)\mapsto \frac {\partial \tilde H}{\partial m} [m](x,p)$
belongs to  $\cC^1(\T^d\times\R^d;  L^\infty(\T^d))$.
  We will not make the distinction between $H$ and $\tilde H$.
\item if $m\in \PP\cap L^1(\T^d)$ and   $a^*= \hbox{argmin}_a  f[m]( x,a)+ p\cdot g[m](x,a)$,  then
  \begin{displaymath}
    \frac {\partial H}{\partial m} (x,p) =  \frac {\partial f}{\partial m}(x,a^*)+p\cdot \frac {\partial g}{\partial m}(x,a^*).
  \end{displaymath}
\end{itemize}
As explained in \cite{MR3134900}, page 13,
 if the feedback function $v$ is smooth enough and if
 $m_0\in \PP\cap L^1(\T^d)$,  then
the probability distribution $m_{v}(t,\cdot)$ has a density with respect to the Lebesgue measure,  $m_{v}(t,\cdot)\in \PP\cap L^1(\T^d)$ for all $t$,
and  its density $m_v$ is solution of the Fokker-Planck equation
 \begin{equation}
\label{eq:4}
\frac{\partial m_v} {\partial t} (t,x)  - \nu \Delta m_v(t,x) +
\div\Big( m_v(t,\cdot) g[m_v (t,\cdot)] (\cdot, v(t,\cdot))  \Big) (x)=0,\;\; t\in (0,T], x\in \T^d,
 \end{equation}
with the initial condition
\begin{equation}
  \label{eq:5}
m_v(0,x)= m_0(x),\quad x\in \T^d.
\end{equation}
Therefore, the control problem consists of minimizing
\begin{displaymath}
  J(v,m_v)=\int_{\OT\times\T^d} f[m_v(t,\cdot)]( x, v(t,x)) m_v(t,x) dx dt + \int_{\T^d} h[m_v(T,\cdot)](x) m_v(T,x) dx,
\end{displaymath}
subject to (\ref{eq:4})-(\ref{eq:5}).
In \cite{MR3134900}, A. Bensoussan, J. Frehse and P. Yam have proved that a necessary condition
for the existence of a smooth feedback function $v^* $ achieving
$J( v^*, m_{v^*})= \min J(v, m_v)$ is that
\begin{displaymath}
  v^*(t,x)={\rm{argmin}}_{v}  \Bigl( f[ m(t,\cdot)](x,v)+ \nabla u(t,x)\cdot g[ m(t,\cdot)](x,v)\Bigr),
\end{displaymath}
where $(m,u)$ solve the
following system of partial differential equations
\begin{eqnarray}
  \label{eq:6}
0&=&
\begin{array}[t]{l}
\ds
\frac{\partial u} {\partial t} (t,x) + \nu \Delta u(t,x) + H[m(t,\cdot)]
( x, \grad u(t,x)) \\ \ds +
\int_{\T^d} \frac{\partial H} {\partial m}  [m(t,\cdot)]
(\xi, \grad u(t, \xi))(x) m(t,\xi) d\xi  ,
\end{array}
\\
\label{eq:7}
	0&=&\ds \frac{\partial m} {\partial t} (t,x)  - \nu \Delta  m(t,x) +
 \div\Bigl( m(t,\cdot) \frac{\partial H} {\partial p} [m(t,\cdot)](\cdot,\grad u(t,\cdot))\Bigr)(x) ,
\end{eqnarray}
with the initial and terminal conditions
\begin{equation}
  \label{eq:8}
  m(0,x)=m_0(x)\quad \hbox{and}\quad u(T,x) = h[m(T,\cdot)] (x) + \int_{\T^d}
\frac{\partial h} {\partial m} [m(T,\cdot)](\xi)(x) m(T,\xi) d\xi.
\end{equation}
It will be useful to write
\begin{equation}\label{eq:15}
  G[m,q](x) :=
 \int_{\T ^d} m(\xi) \frac {\partial}{\partial m} H [m]( \xi, q(\xi))(x)  d\xi
\end{equation}
for functions $m\in \PP\cap L^1(\T^d)$ and $q\in \cC (\T^d; \R^d) $, so that (\ref{eq:6}) can be written
\begin{displaymath}
  0=\frac{\partial u} {\partial t} (t,x) + \nu \Delta u(t,x) + H[m(t,\cdot)]
( x, \grad u(t,x))+  G[m(t,\cdot),\nabla u(t,\cdot)](x).
\end{displaymath}

\begin{remark}
  \label{sec:model-assumptions}
Note the difference with the system of partial differential equations arising in  mean field games, namely
\begin{eqnarray}
\label{eq:9}
0&=&
\ds
\frac{\partial u} {\partial t} (t,x) + \nu \Delta u(t,x) + H[m(t,\cdot)]
( x, \grad u(t,x)),
\\
\label{eq:10}
	0&=&\ds \frac{\partial m} {\partial t} (t,x)  - \nu \Delta  m(t,x) +
 \div\Bigl( m(t,\cdot) \frac{\partial H} {\partial p} [m(t,\cdot)](\cdot,\grad u(t,\cdot))\Bigr)(x) ,
\end{eqnarray}
with the initial and terminal conditions
\begin{equation}
\label{eq:11}
  m(0,x)=m_0(x)\quad \hbox{and}\quad u(T,x) = h[m(T,\cdot)] (x) .
\end{equation}

Both the HJB equation (\ref{eq:6}) and the terminal condition on $u$ in (\ref{eq:8}) involve additional nonlocal terms,
 which account for the variations of $m_v$ caused by variations of the common feedback $v$.
\end{remark}
\begin{remark}
  \label{sec:model-assumptions-1}
At least formally, it is possible to consider situations when  $H$ and $h$ depend
 locally on $m$, i.e. $H[m](x,p)= \tilde H(x, p, m(x))$ and $h[m](x)= \tilde h(x, m(x))$: in this case, (\ref{eq:6})-(\ref{eq:8})  become
\begin{eqnarray}
\label{eq:12}
0&=&
\begin{array}[t]{l}
\ds
\frac{\partial u} {\partial t} (t,x) + \nu \Delta u(t,x) + \tilde H
( x, \grad u(t,x),m(t,x) ) \\ \ds + m(t,x)
 \frac{\partial \tilde H} {\partial m}
(x, \grad u(t, x), m(t,x) ) ,
\end{array}
\\
\label{eq:13}
	0&=&\ds \frac{\partial m} {\partial t} (t,x)  - \nu \Delta  m(t,x) +
 \div\Bigl( m(t,\cdot) \frac{\partial \tilde H} {\partial p} (\cdot,\grad u(t,\cdot),m(t,\cdot))\Bigr)(x) ,
\end{eqnarray}
with the initial and terminal conditions
\begin{equation}
\label{eq:14}
  m(0,x)=m_0(x)\quad \hbox{and}\quad u(T,x) = \tilde h(x, m(T,x)) +
m(T,x) \frac{\partial \tilde h} {\partial m} (x,  m(T,x) ).
\end{equation}

\end{remark}
\section{Existence results}
\label{sec:existence-results}
We focus on the system (\ref{eq:6})-(\ref{eq:8}). We are going to state existence results in some typical situations.
\subsection{Notations}

Let $Q$ be the open set  $Q:=(0,T)\times \T^d$.  We shall need to use spaces of H{\"o}lder functions in $Q$:
For $\alpha\in (0,1)$, the space  of  H{\"o}lder functions $\cC^{ \alpha/2, \alpha} (\bar Q)$ is classically defined  by
\begin{displaymath}
  \cC^{\alpha/2, \alpha} (\bar Q):=\left \{
    \begin{array}[c]{l}
\ds w\in \cC(\bar Q) \;:\; \exists C>0 \hbox { s.t. } \forall (t_1,x_1), (t_2,x_2) \in \bar Q, \\
\ds |w(t_1,x_1)-w(t_2,x_2)| \le C\left( d(x_1,x_2)^2+ |t_1-t_2|   \right) ^{\alpha/ 2}
    \end{array}
\right \}
\end{displaymath}
and we define
\begin{displaymath}
  |w|_{\cC^{\alpha/2,\alpha } (\bar Q)}:=
\sup_{ (t_1,x_1)\not = (t_2,x_2) \in \bar Q}\frac {  |w(t_1,x_1)-w(t_2,x_2)|}{\left( d(x_1,x_2)^2+ |t_1-t_2|   \right) ^{\alpha/ 2} }
\end{displaymath}
and $\|w\|_{\cC^{\alpha/2,\alpha } (\bar Q)}:=  \|w\|_{\cC (\bar Q)}+  |w|_{\cC^{\alpha/2,\alpha }  (\bar Q)}$.
Then the space $\cC^{ (1+\alpha)/2, 1+\alpha} (\bar Q)$ is made of all the functions $w\in \cC(\bar Q)$
which have  partial derivatives $\frac{\partial w}{\partial x_i}\in \cC^{\alpha/2,\alpha } (\bar Q)$ for all $i=1,\dots, d$
and such that  for all $(t_1,x)\not = (t_2,x) \in \bar Q$,
    $ |w(t_1,x)-w(t_2,x)| \le C  |t_1-t_2|   ^{(1+\alpha)/ 2}$ for a positive constant $C$. The space
  $\cC^{ (1+\alpha)/2, 1+\alpha} (\bar Q)$, endowed with the semi-norm
  \begin{displaymath}
    |w|_{\cC^{ (1+\alpha)/2, 1+\alpha} (\bar Q)}:=
 \sum_{i=1}^{d} \|\frac{\partial w}{\partial x_i}\|_{\cC^{\alpha/2,\alpha } (\bar Q)} +
\sup_{ (t_1,x)\not = (t_2,x) \in \bar Q}\frac {  |w(t_1,x_1)-w(t_2,x_2)|}{  |t_1-t_2|   ^{(1+\alpha)/ 2} }
  \end{displaymath}
and norm $\|w\|_{\cC^{ (1+\alpha)/2, 1+\alpha} (\bar Q)}:=   \|w\|_{\cC (\bar Q)}+ |w|_{\cC^{ (1+\alpha)/2, 1+\alpha} (\bar Q)}$
is a Banach space.\\
Finally, the space $\cC^{ 1+\alpha/2, 2+\alpha}$ is made of all the functions $w \in \cC^1(\bar Q)$ which
are twice continuously differentiable w.r.t. $x$, with  partial derivatives
 $\frac{\partial w}{\partial x_i}\in \cC^{ (1+\alpha)/2,1+\alpha} (\bar Q)$ for all $i=1,\dots, d$, and
 $\frac{\partial w}{\partial t}\in \cC^{\alpha/2,\alpha } (\bar Q)$. It is a Banach space with the norm
 \begin{displaymath}
     \|w\|_{\cC^{1+\alpha/2, 2+\alpha} (\bar Q)}:=  \|w\|_{\cC (\bar Q)}+
 \sum_{i=1}^{d} \|\frac{\partial w}{\partial x_i}\|_{\cC^{ (1+\alpha)/2,1+\alpha} (\bar Q)}+
 \|\frac{\partial w}{\partial t}\|_{\cC^{\alpha/2,\alpha } (\bar Q)}
. \end{displaymath}
\subsection{The case when $\partial_p H$ is bounded}
\label{sec:case-when-partial_p}
We make the following assumptions on $h$, $m_0$, $H$ and $G$, in addition to the regularity assumptions on
 $H$ already made in \S~\ref{sec:model-assumptions}:
\begin{itemize}
\item[$(H_0)$] For simplicity only,  the map  $h$ is invariant w.r.t. $m$,
  i.e.  $h[m](x)=u_T(x)$, where $u_T$ is a smooth function defined on $\T^d$. Moreover,  $m_0$ is
  a smooth positive function.
\item[$(H_1)$] There exists a constant $\gamma_0>0$ such that
  \begin{equation*}
    |H[m](x,0)|
    \leq \gamma_0 \qquad  \forall  (m,x) \in  (\PP\cap L^1(\TTd)) \times \TTd
  \end{equation*}

\item[$(H_2)$]\label{condDpH} There exists a constant $\gamma_1>0$ such that
  \begin{equation*}
    \| \frac {\partial H}{\partial p}[m] \|_{{ \rm{Lip}} (\TTd\times  \RRd)  }
    \leq \gamma_1 \qquad  \forall  m \in  \PP\cap L^1(\TTd)
  \end{equation*}
\item[$(H_3)$] For all  $(m,x,p)\in (\PP\cap L^1(\TTd)) \times \TTd\times \R^d$,     $\frac {\partial H}{\partial m}[m](x,p)$ is a $\cC^1$ function on
$\TTd$ and there exists a constant $\gamma_2>0$ such that for all
  $(m,x,p)\in (\PP\cap L^1(\TTd)) \times \TTd\times \R^d$,
  \begin{equation*}	\| \frac {\partial H}{\partial m}[m](x, p)\|_{\cC^1(\T^d)}\le \gamma_2 (1+ |p|)
  \end{equation*}

\item[$(H_4)$]There exists a constant $\gamma_3>0$
 such that:
  \begin{equation*}
\| \frac {\partial H} {\partial p} [m_1](\cdot, 0)-  \frac {\partial H} {\partial p} H[m_2]( \cdot, 0)\|
_{\cC(\TTd)}
 \leq \gamma_3 \|m_1 - m_2\|_{L^1(\TTd)} \qquad \forall m_1,m_2 \in L^1(\TTd).
  \end{equation*}
\item[$(H_5)$] There exists $\gamma_4>0$ such that for $m_1,m_2\in \PP\cap L^1(\TTd)$, $p_1,p_2\in L^\infty(\TTd)$,
  \begin{equation*}
    \|G[m_1, p_1] - G[m_2,p_2]\|_{ L^\infty(\TTd)} \leq \gamma_4 \left( \|p_1 - p_2\|_{L^\infty(\TTd)} + \|m_1 - m_2\|_{L^1(\TTd)}\right).
  \end{equation*}
\end{itemize}

\paragraph{Example}  All the assumptions above are satisfied by the map $H$ :
\begin{displaymath}
 H[m] (x,p)= -\frac { \Phi(p)}{ (c+  (\rho_1* m) (x) )^\alpha} + F(x, (\rho_2*m) (x)),
\end{displaymath}
  where $\Phi$ is
a $\cC^2$ function from $\R^d$ to $\R_+$ such that $D^2 \Phi$ and $D \Phi$ are
bounded,   $\alpha$ and $c$ are positive numbers, $\rho_1$ and $\rho_2$ are  smoothing kernels in $\cC^\infty(\T^d)$,
$\rho_1$ is nonnegative, and $F$ is a $\cC^2$  function defined on $\T^d \times \R^d$. Here, $\rho*m(x) = \int_\TTd \rho(x-z)m(z)dz$.
It is easy to check that
\begin{equation*}
G[m, q](x) =
 \left(\alpha  \tilde \rho_1 * \left(m \frac {\Phi(q)} {(c+ \rho_1*m)^{\alpha+1}} \right)\right) (x)+
 \tilde \rho_2 * (m F'(\cdot, \rho_2*m ))(x)
\end{equation*}
where $\tilde \rho_1(x) = \rho_1(-x)$ and $\tilde \rho_2(x) = \rho_2(-x)$. \\
Such a Hamiltonian models situations in which there are congestion effects,
 i.e. the cost of displacement increases in the regions where the density is large.
The term $F(x, (\rho_2*m) (x))$  may model aversion to crowded regions. The prototypical situation is
$g[m](x,a)=a$ and $\Phi(q)= \min_{b\in \cK} (q\cdot b + \Phi^* (b))$, where $\cK$ is a compact subset of $\R^d$.
Setting $\Phi^*(b)=+\infty$ if $b\notin \cK$, $H$ corresponds to the cost
$f[m](x,a)=\frac 1 { (c+  (\rho_1* m) (x) )^\alpha} \Phi^* \left( a  (c+  (\rho_1* m) (x) )^\alpha \right) +  F(x, (\rho_2*m) (x))$.
 \subsubsection{A priori estimates}
\label{sec:priori-estimates} We first assume that (\ref{eq:6})-(\ref{eq:8}) has a sufficiently smooth solution
 and we look for a priori estimates.
\paragraph{ \bf Step 1: uniform bounds on $\|m\|_{L^p(0,T; W^{1,p}(\TTd))} + \|m\|_{\cC^{\alpha/2, \alpha}(\bar Q)}$, $p \in [1, \infty),  \alpha \in [0,1)$ }

 First, standard arguments yield that  $m(t,\cdot)\in \PP$ for all $t\in [0,T]$.\\
From Assumption $(H_2)$, the function
$b: (t,x)\mapsto \partial_p H[m(t,\cdot)]( x,\grad u(t,x))$ is such that
 $\|b\|_{L^\infty(Q)}\le \gamma_1$. The Cauchy problem satisfied by $m$ can be written
 \begin{equation}
   \label{eq:18}
   \begin{split}
    \ds \frac{\partial m} {\partial t} (t,x)  - \nu \Delta  m(t,x) +\div  (b(t,\cdot) m(t,\cdot)) (x)=0 ,\\
    m(0,x)=m_0(x),
   \end{split}
 \end{equation}
and from the classical theory on weak solutions to parabolic equations, see e.g. Theorem 6.1 in \cite{MR1465184}, there exists a constant $C_0$ depending only on $\|m_0\|_{L^2(\T^d)}$ such that
\[\|m\|_{ L^2\big(0,T; H^1(\TTd)\big)} +
\|m\|_{\cC\big(\OT; L^2(\TTd)\big)} \le C_0.\]
 Moreover, since the operator in (\ref{eq:18}) is in divergence form,
we have  maximum estimates on $m$, see   Corollary 9.10 in \cite{MR1465184}:
there exists a constant $C_1$ depending only on $\|m_0\|_\infty$ and $\gamma_1$ such that
\begin{equation}\label{eq:19}
	m(t,x)\leq C_1 \qquad \forall (t,x)\in\OT\times\T^d.
\end{equation}
Therefore, the Fokker-Planck equation in (\ref{eq:18}) can be rewritten
\begin{equation}
  \label{eq:20}
      \ds \frac{\partial m} {\partial t} (t,x)  - \nu \Delta  m(t,x) +\div  (B(t,\cdot) ) (x)=0 ,
\end{equation}
where $\| B\| _{\infty} \le   \gamma_1 C_1$.
 From from standard results on the heat equation, see \cite{MR0241822},  this implies that for all $p\in [1,\infty)$ there exists a constant $C_2(p)$ which
 depends on $\|m_0\|_{\infty}$ and $\gamma_1$,
 such that
 \begin{equation}\label{eq:21}
 	\|m\|_{L^p\big(0,T; W^{1,p}(\TTd) \big)}
  + \|\frac {\partial m}{\partial t}\|_{L^p\big(0,T; W^{-1,p}(\TTd) \big)}
 \leq C_2(p) .
 \end{equation}
Finally,
 H{\"o}lder estimates for the heat equation with a right hand side
in divergence form, see for example Theorem 6.29  in \cite{MR1465184}, yield that for any $\alpha\in (0,1)$, there exists a positive constant
$C_3(\alpha)\ge C_1$ which only depends  on $\gamma_1$ and on $\|m_0\|_{\cC^\alpha (\T^d)}$  such that
\begin{equation}\label{eq:22}
	\|m\|_{\cC^{\alpha/2,\alpha}(\bar Q)} \leq C_3(\alpha).
\end{equation}
\paragraph{\bf Step 2: uniform bounds on $\|u\|_{\cC^{(1+\theta)/2, 1+\theta}(\bar Q)}$, $ \theta \in (0,1)$}
Defining
\begin{displaymath}
  a(t,x):=  -H[m(t,\cdot)](x,0)\quad \hbox{and }\quad
 A (t,x):=\int_0^1 \frac {\partial H}{\partial p} [m(t,\cdot)](x, \zeta \nabla u (t,x)) d\zeta,
\end{displaymath}
 the HJB equation (\ref{eq:6}) can be rewritten
\begin{equation}\label{eq:23}
	\frac{\partial u}{\partial t}(t,x) + \nu \Delta u(t,x)
+  A(t,x)  \cdot \grad u(t,x) =  a(t,x) - G[m(t,\cdot), \grad u (t,\cdot)] (x).
\end{equation}
For some smooth function $\hat u$, let us consider
\begin{equation}\label{eq:24}
	\frac{\partial u}{\partial t}(t,x) + \nu \Delta u(t,x)
+  A(t,x)  \cdot \grad u(t,x) =  a(t,x) - G[m(t,\cdot), \grad \hat u (t,\cdot)] (x)
\end{equation}
instead of (\ref{eq:23}), with the same terminal condition as in (\ref{eq:8}).
From Assumption  $(H_1)$ and  $(H_2)$,  $\|a\|_{\infty}\le \gamma_0 $ and $\|A\|_{\infty}\le \gamma_1$.
From Assumption   $(H_3)$,
\begin{equation}
  \label{eq:25}
\|G[m, \grad \hat u ]\|_{L^2(\T^d)}\le c ( 1+ \|\nabla \hat u\|_{L^2(\T^d)}   ),
\end{equation}
 where $c>0$ depends on  $C_1$ in (\ref{eq:19}) and $\gamma_2$.
Multiplying (\ref{eq:24}) by $u(t,x)e^{-2\Lambda t}$ and integrating on $\T^d$,
then using the bounds on  $\|a\|_{\infty}$, $\|A\|_{\infty}\le \gamma_1$ and
  (\ref{eq:25}), a standard argument yields that there exist constants $\Lambda$  and $\tilde C_4$  which depend only on $\gamma_0$, $\gamma_1$, $\gamma_2$,  $\|m_0\|_{\infty}$ such that
  \begin{equation}
    \label{eq:26}
    \begin{split}
&     -\frac {d}{dt}  \left( \|u(T-t,\cdot)\|^2_{L^2(\T^d)} e^{-2\Lambda (T-t)}\right) +  \nu  \|\nabla u(T-t,\cdot)\|^2_{L^2(\T^d)} e^{-2\Lambda (T-t)}
 \\ \le & \tilde C_4 + \frac \nu  2  \|\nabla \hat u(T-t,\cdot)\|^2_{L^2(\T^d)} e^{-2\Lambda (T-t)} .
    \end{split}
  \end{equation}
Hence, if
\begin{equation}\label{eq:27}
   \nu   \int_{t=0}^T \|\nabla \hat u(T-t,\cdot)\|^2_{L^2(\T^d)} e^{-2\Lambda (T-t)} dt \le
C_4,
\end{equation}
with
\begin{equation}\label{eq:28}
C_4= 2 \tilde C_4 T + 2 \int_{\T^d}  u_T^2 (x) dx,
\end{equation}
then
\begin{equation}\label{eq:29}
   \sup_{t}  e^{-2\Lambda (T-t)} \int_{\T^d}  u^2 (T-t, x) dx +
 \nu   \int_{t=0}^T \|\nabla  u (T-t,\cdot)\|^2_{L^2(\T^d)} e^{-2\Lambda (T-t)} dt \le C_4.
\end{equation}
 Similarly,  a solution  of (\ref{eq:6})-(\ref{eq:8}) satisfies (\ref{eq:29}) with the same constants $\Lambda$ and $C_4$.
Note that $\Lambda$ can be chosen large enough such that the function $(t,x)\mapsto u_T(x)$ satisfies (\ref{eq:29}).

For a solution  of (\ref{eq:6})-(\ref{eq:8}), this implies that    $\partial_t u + \nu \Delta u$
is bounded in $L^2(Q)$,
hence that $u$ is bounded in $\cC^0(0,T; H^1(\TTd))$
by a constant  $\bar C_4> \|u_T\|_{H^1(\T^d)}$ which depends  on $ \Lambda$, $C_4$, $\gamma_1$ and
 $\|u_T\|_{H^1(\T^d)}$, i.e.
\begin{equation}
  \label{eq:31}
\| \grad u \|_{L^\infty (0,T; H^1(\T^d))} \le \bar C_4.
\end{equation}
\\
As a consequence, the left-hand side of (\ref{eq:23}) is bounded in $L^\infty(Q)$,
and this yields   H{\"o}lder estimates on $u$: by using Theorem 6.48 in \cite{MR1465184},
we see that for all $\theta\in (0,1)$, there exists a constant $C_5(\theta)$ which depends on
$\theta$, $\|m_0\|_{\infty}$, $\|u_T\|_{\cC^{1+\theta}(\T^d)}$,  $\gamma_0$, $\gamma_1$,  $\gamma_2$ such that
 \begin{equation} \label{eq:32}
	\|u\|_{\cC^{(1+\theta)/2, 1+\theta } (\bar Q) } \leq C_5(\theta),
\end{equation}
which holds  for a solution of (\ref{eq:24})  with the terminal condition (\ref{eq:8}), as soon as
$\hat u$ satisfies (\ref{eq:29}) and (\ref{eq:31}).

\paragraph{\bf Step 3: uniform bound on $ \|m\|_{\cC^{(1+\theta)/2,1+\theta}  (\bar Q) }$,  $ \theta \in (0,1) $} Let us go back to (\ref{eq:7}).
From  Assumptions $(H_1)-(H_4)$, and from the previous two steps, we see that for any $ \theta \in (0,1) $,
$m$ and $\frac{\partial H} {\partial p} [m](\cdot,\grad u)$ are both  bounded in $\cC^{\theta/2, \theta } (\bar Q) $
 by constants which depend on $m_0$ and $u_T$,  and  $\gamma_0,\dots,\gamma_3$.
Thus, the function $B$ in  (\ref{eq:20}) is bounded in $\cC^{\theta/2, \theta } (\bar Q) $. Using  Theorem 6.48 in \cite{MR1465184} for the heat equation with a data in divergence form,
we see that for all $\theta\in (0,1)$, there exists a constant $C_6(\theta)$ which depends on
$\theta$, $\|m_0\|_{\cC^{1+\theta}(\T^d)}$, $\|u_T\|_{\cC^{1+\theta}(\T^d)}$,  $\gamma_0,\dots,\gamma_3$ such that
 \begin{equation*}
	\|m\|_{\cC^{(1+\theta)/2, 1+\theta } (\bar Q) } \leq C_6(\theta).
\end{equation*}
\paragraph{\bf Step 4: uniform bounds on $\|u\|_{\cC^{1+\theta/2, 2+\theta}(\bar Q)}$, $ \theta \in (0,1)$}
From the previous steps and Assumptions $(H_1)-(H_4)$, we see that there exists a constant $c$ such that the functions in (\ref{eq:23}) satisfy
  $\|a\|_{\cC^{\theta/2, \theta } (\bar Q) }  \le c $ and $\|A\|_{\cC^{\theta/2, \theta } (\bar Q) } \le c$.
Similarly, from Assumptions $(H_3)$ and $(H_5)$, $\|G[m, \grad u ]\|_{\cC^{\theta/2, \theta } (\bar Q) } \le c$.
Standard regularity results on parabolic equations, for instance Theorem 4.9 in \cite{MR1465184} lead to the existence of  $C_7(\theta)$
such that
\begin{equation*}
	\|u\|_{\cC^{1+\theta/2, 2+\theta}(\bar Q)} \leq C_7(\theta).
\end{equation*}

\subsubsection{The existence theorem}
\label{sec:existence-theorem}
\begin{theorem}\label{sec:existence-theorem-2}
  Under the Assumptions $(H_0)$-$(H_5)$, for $\alpha\in (0,1)$ there exist functions $u\in \cC^{1+\alpha/2,2+\alpha}(\bar Q)$ and
$m\in \cC^{(1+\alpha)/2,1+\alpha}(\bar Q)$
 which satisfy  (\ref{eq:6})-(\ref{eq:8}), ( note that (\ref{eq:7}) is satisfied in a weak sense).
\end{theorem}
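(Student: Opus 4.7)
The plan is to apply Schauder's fixed point theorem in a H\"older topology; the a priori bounds of \S~\ref{sec:priori-estimates} supply essentially all of the hypotheses needed.

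Fix $\theta \in (\alpha, 1)$, set $E := \cC^{(1+\alpha)/2,1+\alpha}(\bar Q) \times \cC^{(1+\alpha)/2,1+\alpha}(\bar Q)$, and let $K \subset E$ be the closed convex subset of pairs $(\hat u,\hat m)$ with $\hat u(T,\cdot)=u_T$, $\hat m(0,\cdot)=m_0$, $\hat m(t,\cdot)\in\PP$ for every $t$, and satisfying the stronger bounds $\|\hat u\|_{\cC^{(1+\theta)/2,1+\theta}(\bar Q)}\le C_5(\theta)$, $\|\hat m\|_{\cC^{(1+\theta)/2,1+\theta}(\bar Q)}\le C_6(\theta)$ from Steps 2--3. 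Arzel\`a--Ascoli makes $K$ compact in $E$. I would then define a map $\Phi:K\to E$ by $\Phi(\hat u,\hat m):=(u,m)$, where $m$ is the weak solution of the linear Fokker--Planck equation
\[
\partial_t m - \nu \Delta m + \div\bigl(m\, \partial_p H[\hat m](\cdot,\nabla \hat u)\bigr) = 0, \qquad m(0,\cdot) = m_0,
\]
and $u$ is the classical solution of the quasilinear Hamilton--Jacobi--Bellman equation
\[
\partial_t u + \nu \Delta u + H[\hat m](\cdot, \nabla u) + G[\hat m, \nabla \hat u] = 0, \qquad u(T,\cdot) = u_T.
\]
Assumption $(H_0)$ makes the terminal datum independent of $\hat m$. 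Each subproblem is classically solvable: the Fokker--Planck equation is linear parabolic with a H\"older drift (by $(H_2)$ and the regularity of $(\hat u,\hat m)\in K$), while the HJB equation is quasilinear with $p$-Lipschitz principal part and H\"older source $G[\hat m,\nabla\hat u]\in \cC^{\theta/2,\theta}(\bar Q)$ (by $(H_3)$--$(H_5)$); both fall in the scope of Chapter VI of \cite{MR0241822}.

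The self-mapping property $\Phi(K)\subset K$ is exactly what Steps 1--4 of \S~\ref{sec:priori-estimates} yield when applied to the pair $(u,m)$ thus produced; note that the linearised form (\ref{eq:24}) used in Step~2 already matches the shape entering the definition of $\Phi$. Continuity of $\Phi$ for the $E$-topology then follows from the Lipschitz hypotheses $(H_2)$--$(H_5)$: if $(\hat u_n,\hat m_n)\to(\hat u,\hat m)$ in $E$, then $\nabla \hat u_n\to\nabla \hat u$ uniformly on $\bar Q$ and $\hat m_n\to\hat m$ uniformly, so the drift $\partial_p H[\hat m_n](\cdot,\nabla \hat u_n)$ and the source $G[\hat m_n,\nabla \hat u_n]$ converge in $L^\infty(\bar Q)$; standard continuous-dependence estimates for the linear Fokker--Planck equation and for the quasilinear HJB transfer this to convergence of $\Phi(\hat u_n,\hat m_n)$ in $E$. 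Schauder's theorem then produces a fixed point $(u,m)\in K$, and Step~4 upgrades $u$ to $\cC^{1+\alpha/2,2+\alpha}(\bar Q)$, completing the proof.

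The hard part will be the clean verification of the continuity of $\Phi$: the non-local coupling through $\partial_p H[\hat m](\cdot,\nabla \hat u)$ in the drift and through $G[\hat m,\nabla \hat u]$ in the source rules out purely pointwise comparisons, and the $L^1$-Lipschitz assumptions $(H_4)$--$(H_5)$ are essential for converting uniform convergence of the $\hat m_n$ into $L^\infty$-convergence of the relevant coefficients. Once this is in hand, the remainder is a routine combination of Schauder with the a priori estimates already at our disposal.
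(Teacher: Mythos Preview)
Your approach is a reasonable alternative to the paper's, but the two differ in structure, and your self-mapping step has a gap worth flagging.

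The paper does not work directly in H\"older spaces with a two-variable map. Instead, it first \emph{truncates} the nonlocal term, replacing $G$ by $\eta_R\circ G$ for a smooth bounded odd cutoff $\eta_R$; this makes the source in the HJB uniformly bounded by $2R$, so a solution $(u_R,m_R)$ of the truncated system (\ref{eq:79})--(\ref{eq:80}) is obtained by a Leray--Schauder argument with a \emph{one-variable} map $\chi:m\mapsto\tilde m$ (composing through $u$) acting on $X=\cC^0([0,T];L^2(\TTd)\cap\PP)$. In a second step, the a priori estimates of \S\ref{sec:priori-estimates}---applied to the \emph{coupled} solution $(u_R,m_R)$, not to a decoupled iteration---show that $\|\nabla u_R\|_{L^\infty}$ is bounded independently of $R$; hence $\eta_R(G[m_R,\nabla u_R])=G[m_R,\nabla u_R]$ for $R$ large and $(u_R,m_R)$ solves the original system. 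The truncation is precisely the device that sidesteps the circularity you face.

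In your scheme the inclusion $\Phi(K)\subset K$ is not as automatic as you claim. The H\"older bound $C_5(\theta)$ in (\ref{eq:32}) is established for solutions of (\ref{eq:24}) \emph{under the hypothesis} that $\hat u$ satisfies the energy-type constraints (\ref{eq:29}) and (\ref{eq:31}); it is the contractive factor $\tfrac{\nu}{2}$ in (\ref{eq:26}) that makes those constraints self-reproducing with the fixed constant $C_4$. Your set $K$, however, is defined only by the H\"older bound $\|\hat u\|_{\cC^{(1+\theta)/2,1+\theta}}\le C_5(\theta)$, and there is no reason this implies (\ref{eq:27}) with that same $C_4$: a H\"older bound on $\nabla\hat u$ controls the weighted energy integral only up to a constant depending on $C_5(\theta)$ itself, so the output bound then depends on the input bound---a genuine circularity. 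The fix is easy (add (\ref{eq:29}) and (\ref{eq:31}) to the definition of $K$; they are convex and closed under the $E$-topology), but as written your argument omits this. Once repaired, your route is a legitimate variant that trades the paper's truncation-then-remove device for a more carefully engineered invariant set.
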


\begin{proof}
  The argument is reminiscent of that used  by J-M. Lasry and P-L. Lions for mean field games: it is done in two steps

  \paragraph{Step A}
For $R>0$, let $\eta_R: \R\to \R$ be a smooth, nondecreasing  and odd function such that
\begin{enumerate}
\item $\eta_R(y)=y$ if $|y|\le R$, $\eta_R(y)= 2R$ if  $y\ge 3R$
\item $\|\eta_R'\|_{\infty}\le 1$
\end{enumerate}
We consider the modified set of equations
\begin{eqnarray}
  \label{eq:79}
0=
\ds
\frac{\partial u} {\partial t} (t,x) + \nu \Delta u(t,x) + H[m(t,\cdot)]
( x, \grad u(t,x))+  \eta_R (G[m(t,\cdot),\nabla u(t,\cdot)](x)),
\\
\label{eq:80}
	0=\ds \frac{\partial m} {\partial t} (t,x)  - \nu \Delta  m(t,x) +
 \div\Bigl( m(t,\cdot) \frac{\partial H} {\partial p} [m(t,\cdot)](\cdot,\grad u(t,\cdot))\Bigr)(x) .
\end{eqnarray}

We are going to apply Leray-Shauder fixed point theorem to a map $\chi$ defined
for example in $ X=\left \{  m\in \cC^0 ([0,T]; L^2 (\T^d) \cap \PP)\right \}  $:
  consider first the map $\psi:  X \to X \times L^2(0,T; H^1(\T^d))$, $m\mapsto (m,u)$
where $u$ is a weak solution of  (\ref{eq:79}) and $u|_{t=T}=u_T$.  Existence and uniqueness for this problem are well known.
Moreover, from the estimates above,  for every $0<\alpha<1$, $\|u\|_{\cC^{1/2+\alpha/2,1+\alpha }(\bar Q)}$
 is  bounded by a constant independent of $m$ and $m\mapsto u$ is continuous from $  X$ to
$\cC^{1/2+\alpha/2,1+\alpha }(\bar Q) $.
\\
Fix $\theta\in (0,1)$, and consider the map  $\zeta :  X\times \cC^{1/2+\theta/2,1+\theta}(\bar Q)  \to  L^2(0,T; H^1(\T^d))$,
$(m,u)\mapsto \tilde m$ where $\tilde m$ is a weak solution of  the Fokker-Planck equation
\begin{displaymath}
  	0=\ds \frac{\partial \tilde m} {\partial t} (t,x)  - \nu \Delta  \tilde m(t,x) +
 \div\Bigl( \tilde m(t,\cdot) \frac{\partial H} {\partial p} [m(t,\cdot)](\cdot,\grad u(t,\cdot))\Bigr)(x) .
\end{displaymath}
 and $\tilde m|_{t=0}=m_0$. Existence and uniqueness are well known, and moreover, the estimates above tell us that for all $0<\alpha<1$,
there exists $ R_\alpha>0$ such that
$\|\tilde m \|_{\cC^{\alpha/2,\alpha }(\bar Q)}  \le R_\alpha  $ uniformly with respect to $m$ and $u$.
Moreover from the assumptions, it can be seen that
$\zeta$ maps continuously  $X\times \cC^{1/2+\theta/2,1+\theta}(\bar Q) $ to $X$.
\\
Let $K$ be the set $\{\|m \|_{\cC^{\alpha/2,\alpha }(\bar Q)}  \le R_\alpha;  m|_{t=T}=m_T \}\cap X$:
this set is a compact and convex subset of $X$
 and the map $\chi=\zeta\circ \psi$: $m\mapsto \tilde m$ is continuous in $X$ and leaves $K$ invariant.
We can apply Leray-Shauder fixed point theorem  the map $\chi$,
 which yields the existence of a solution $(u_R,m_R)$ to (\ref{eq:79})-(\ref{eq:80}). Moreover the a priori estimates above
 tell us that  $u_R\in \cC^{1+\alpha/2,2+\alpha}(\bar Q)$ and
$m_R\in \cC^{(1+\alpha)/2,1+\alpha}(\bar Q)$.

\paragraph{Step B}
Looking at all the a priori estimates above, it can be seen that $m_R$, (resp $u_R$)
 belongs to a bounded subset of $\cC^{\alpha/2,\alpha }(\bar Q)$ (resp.  $\cC^{1/2+\alpha/2,1+\alpha }(\bar Q)$)
independent of $R$. Hence, for $R$ large enough, $ \eta_R (G[m_R,\nabla u_R])= G[m_R,\nabla u_R]$, and $(m_R,u_R)$ is a weak solution of   (\ref{eq:6})-(\ref{eq:8}), with    $u_R\in \cC^{1+\alpha/2,2+\alpha}(\bar Q)$ and
$m_R\in \cC^{(1+\alpha)/2,1+\alpha}(\bar Q) $.
\end{proof}
\begin{remark}
  \label{sec:existence-theorem-1}
It is possible to weaken some of the assumptions in Theorem  \ref{sec:existence-theorem-2}: for example, we can assume the following weaker  version of $(H_2)$, namely:\\
$(H_2')$ There exists a constant $\gamma_1>0$ and $ \eta \in (0,1)$ such that
\begin{itemize}
\item   $\forall  m \in  \PP\cap L^1(\TTd)$,  $\| \frac {\partial H}{\partial p}[m] \|_{ \cC (\TTd\times  \RRd)  }   \leq \gamma_1$
\item  $\forall  m \in  \PP\cap L^1(\TTd)$,  $x,y\in \T^d$, $p,q\in \R^d$,
  \begin{displaymath}
|    \frac {\partial H}{\partial p}[m](x,p)-\frac {\partial H}{\partial p}[m](y,q)| \le \gamma_1 ( d(x,y) + | p-q|^\eta)
  \end{displaymath}
\end{itemize}
Indeed, the regularity of $ \frac {\partial H}{\partial p}$ with respect to $p$ is only used in Steps 3 and 4 above: with this weaker assumptions, the conclusions of steps 3 and 4 hold with $0<\theta<\eta$, and this is enough for proving the existence of
$u\in \cC^{1+\alpha/2,2+\alpha}(\bar Q)$ and
$m\in \cC^{(1+\alpha)/2,1+\alpha}(\bar Q) $
 for some $\alpha$, $0<\alpha<\eta $
 which satisfy  (\ref{eq:6})-(\ref{eq:8}).
\end{remark}

\subsection{ Hamiltonian with a subquadratic growth in $p$: a specific case}
\label{sec:hamilt-with-subq}
For a smooth nonnegative periodic function $\rho$, two  constants $\alpha>0$ and $\beta$, $1<\beta\le 2$,
 let us focus on  the following Hamiltonian:
\begin{equation}\label{eq:37}
	H[m](x, p) :=- \frac{|p|^\beta}{ (1+(\rho*m)(x))^\alpha}.
\end{equation}
 The map $G$ defined in (\ref{eq:15}) is
\begin{equation*}
	G[m, q](x) = \alpha \left(\tilde \rho * \left(m \frac {|q|^\beta} {\left(1+(\rho*m)\right)^{\alpha+1}} \right)\right)(x),
\end{equation*}
where $\tilde \rho(x) := \rho(-x)$.
\\
Assuming that $m_0$ is smooth, let us call $\bar m_0=\|m_0\|_{\infty}$:  for all $x\in \T^d$, $0< m_0(x)\le \bar m_0$.
We assume that
\begin{equation}
  \label{eq:38}
\|\rho\|_{L^1(\T^d)} < \frac{\beta-1}{ \alpha \bar m_0}.
\end{equation}

\begin{remark}
 \label{sec:hamilt-with-subq-1}
It would be interesting to make further investigations to see if
the assumption on the regularizing kernel  $\rho$ in (\ref{eq:38}) is really necessary, since it is not necessary in the context of mean field games with congestion.
Yet, in the a priori estimates proposed below, (\ref{eq:38}) is useful for getting a bound on $\|m H[m](\cdot,\nabla u)\|_{L^1 (Q)}$, see (\ref{eq:40}).
\end{remark}

\subsubsection{A priori estimates}
\label{sec:priori-estimates-1}
We first assume that (\ref{eq:6})-(\ref{eq:8}) has a sufficiently smooth weak solution
 and we look for a priori estimates.
\paragraph{\bf Step 1: a lower bound on $u$}
Since $G$ is non negative,  by comparison, we see that
\begin{equation*}
	u(t,x) \ge \underline u_T:= \min_{\xi \in \TTd} u(T,\xi) \qquad \forall (t,x) \in \OT \times \TTd.
\end{equation*}
\paragraph{\bf Step 2: an energy estimate and its consequences}
Let us multiply (\ref{eq:6}) by $m-\bar m_0$ and (\ref{eq:7})
 by $u$ and integrate the two resulting equations on $\T^d$.
Summing the resulting identities, we obtain:
 \begin{align*}
   &\int_Q \frac{ \partial}{\partial t} (u(t,x)(m(t,x)-\bar m_0))dxdt +
   \int_Q H[m(t, \cdot)]( x, \grad u (t,x)) (m(t,x)-\bar m_0)dxdt\\
   & + \int_Q G[m(t,\cdot),\grad u(t,\cdot)]( x) (m(t,x)-\bar m_0)dxdt \\
&+
{\int_Q \div\left( m(t,x) \frac{ \partial}{\partial p} H[m(t,\cdot)]( x, \grad u(t,x))
\right) u(t,x)dxdt}
= 0
\end{align*}
Hence
\begin{equation}
  \label{eq:39}
  \begin{split}
&\int_{\T^d} u(T,x)(m(T,x)-\bar m_0) dx + \int_{\T^d} u(0,x)(\bar m_0-m(0,x)) dx\\
	= &\int_Q H[m(t, \cdot)]( x, \grad u (t,x)) (\bar m_0-m(t,x))dxdt\\
 &+ \int_Q G[m(t,\cdot),\grad u(t,\cdot)]( x) (\bar m_0-m(t,x))dxdt\\
&+\int_Q m(t,x) \frac{ \partial}{\partial p} H[m(t,\cdot)]( x, \grad u(t,x))\cdot \grad u(t,x) dxdt
  \end{split}
\end{equation}
In  (\ref{eq:39}), the first term in the left hand side is bounded from below by $ -\|u_T\|_{\infty}( 1+ \bar m_0)$,
The second term is larger than $   \underline u_T\int_{\T^d} (\bar m_0-m(0,x)) dx= (\bar m_0 -1) \underline u_T$. Therefore, the
left hand side of (\ref{eq:39}), is bounded from below by a constant $c$ which only depends on $ \bar m_0$ and $ \|u_T\|_{\infty}$; we obtain that
\begin{displaymath}
    \begin{split}
c\le  & (\beta -1) \int_Q     m(t,x) H[m(t, \cdot)]( x, \grad u (t,x)) dxdt + \int_Q \bar m_0 H[m(t, \cdot)]( x, \grad u (t,x)) dxdt
\\
& + \alpha\int_Q(\bar m_0-m(t,x)) \tilde\rho * \left(m(t,\cdot) \frac {|\grad u(t,\cdot)|^\beta} {(1+\rho*m(t,\cdot))^{\alpha+1}}\right)(x) dxdt.
  \end{split}
\end{displaymath}
We see that  last term can be bounded as follows:
\begin{displaymath}
  \begin{split}
       & \int_Q(\bar m_0-m(t,x)) \tilde\rho * \left(m(t,\cdot) \frac {|\grad u(t,\cdot)|^\beta} {(1+\rho*m(t,\cdot))^{\alpha+1}}\right)(x) dxdt
\\ \le &
\bar m_0\int_Q \tilde\rho * \left(m(t,\cdot) \frac {|\grad u(t,\cdot)|^\beta} {(1+\rho*m(t,\cdot))^{\alpha}}\right)(x) dxdt
\\
\le & \bar m_0 \|\rho\|_{L^1(\T^d)} \int_Q m(t,x) \frac {|\grad u(t,x)|^\beta} {(1+\rho*m(t,x))^{\alpha}} dxdt
\\ = &  - \bar m_0 \|\rho\|_{L^1(\T^d)} \int_Q  m(t,x) H[m(t, \cdot)]( x, \grad u (t,x)) dxdt .
   \end{split}
\end{displaymath}
Therefore,
\begin{displaymath}
    \begin{split}
 c\le & \ds  \left (\beta -1  - \alpha \bar m_0 \|\rho\|_{L^1(\T^d)}\right ) \int_Q     m(t,x) H[m(t, \cdot)]( x, \grad u (t,x)) dxdt \\ & \ds + \int_Q \bar m_0 H[m(t, \cdot)]( x, \grad u (t,x)) dxdt.
\end{split}
\end{displaymath}
From (\ref{eq:37}) and (\ref{eq:38}), we see that there exists a constant $C_1$ which depends on $ \bar m_0$ and $ \|u_T\|_{\infty}$
such that
\begin{equation}
  \label{eq:40}
\| m H[m](\cdot, \nabla u )\|_{L^1(Q)} + \|  H[m](\cdot, \nabla u )\|_{L^1(Q)} \le C_1.
\end{equation}
Using (\ref{eq:40}), we deduce from a comparison argument applied to the HJB equation
that there exists a constant $C_2$  which depends on $ \bar m_0$ and $ \|u_T\|_{\infty}$
such that
\begin{equation}
  \label{eq:41}
\|u\|_{L^\infty(Q)}\le C_2.
\end{equation}
Since $1< \beta \le 2$, there exists a constant $c$ such that
$|\frac{\partial H[m]}{\partial p} (x, p)|^2\le c(1 - H[m](x,p))$. We deduce from (\ref{eq:40}) and the latter observation that there exists a constant $C_3>0$ such that
\begin{equation}
\label{eq:42}
\int_Q  (m(t,x)+ 1) \left |\frac{\partial H[m(t,\cdot)]}{\partial p}(x, \nabla u(t,x) )\right|^2dxdt
 \le C_3.
\end{equation}
\paragraph{\bf Step 3: uniform estimates from the Fokker-Planck equation}
The following estimates can be proved exactly  as in \cite{MR3195848},  Lemma 2.3 and Corollary 2.4,  (see also \cite{MR2928380}, Lemma 2.5 and Corollary 2):
\begin{lemma} \label{sec:bf-step-3} For  $\gamma=\frac {d+2}d$ if $d>2$ and all $\gamma<2$ if $d=2$, there exists an  constant $c>0$, (independent from $m_0$ and $u_T$)
 such that
  \begin{equation}\label{eq:43}
    \begin{split}
 & \sup_{t\in [0,T]} \|m(t,\cdot )\log( m(t,\cdot ))\|_{L^1(\T ^d)} +
 \| \sqrt m \|_{L^2(0,T; H^1(\T^d))}^2+ \|m\|_{L^\gamma(Q)}^\gamma \\
\le &c\left( \int_Q  m(t,x)\left |\frac{\partial H[m(t,\cdot)]}{\partial p}(x, \nabla u(t,x) ) \right|^2 dxdt
+ \int_{\T^d} m_0(x)\log(m_0(x))dx \right)  .
    \end{split}
  \end{equation}
\end{lemma}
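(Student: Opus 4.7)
\textbf{Proof plan for Lemma \ref{sec:bf-step-3}.}

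The strategy is the classical entropy estimate for Fokker--Planck equations with a (non-smooth) drift, combined with a Sobolev/Gagliardo--Nirenberg interpolation to upgrade the resulting control into an $L^\gamma$ bound. Set $b(t,x):=\frac{\partial H}{\partial p}[m(t,\cdot)](x,\nabla u(t,x))$ so that the Fokker--Planck equation (\ref{eq:7}) reads $\partial_t m - \nu\Delta m + \mathrm{div}(m\,b)=0$. First I would (formally, after a standard approximation $m\leadsto m+\varepsilon$ and passage to the limit) multiply this equation by $1+\log m$ and integrate over $\T^d$. The diffusion term produces $\nu\int |\nabla m|^2/m\,dx = 4\nu\int |\nabla\sqrt m|^2 dx$, the time derivative integrates to $\frac{d}{dt}\int m\log m\,dx$, and the drift term becomes $-\int b\cdot \nabla m\,dx = -2\int (\sqrt m\, b)\cdot\nabla\sqrt m\,dx$.

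Applying Cauchy--Schwarz and Young's inequality with parameter $2\nu$ to the drift term, I obtain
\begin{equation*}
\frac{d}{dt}\int_{\T^d} m\log m\,dx + 2\nu\int_{\T^d}|\nabla\sqrt m|^2 dx \le \frac{1}{2\nu}\int_{\T^d} m\,|b|^2 dx.
\end{equation*}
Integrating in time from $0$ to any $t\in[0,T]$, and noting that $\int m\log m\,dx\ge -C_d$ uniformly (since $m\in\PP$ on $\T^d$ of finite measure), I get
\begin{equation*}
\sup_{t\in[0,T]}\|m(t,\cdot)\log m(t,\cdot)\|_{L^1(\T^d)} + \|\sqrt m\|_{L^2(0,T;H^1(\T^d))}^2 \le c\Bigl(\int_Q m\,|b|^2 dx dt + \int_{\T^d} m_0\log m_0\,dx\Bigr),
\end{equation*}
using also that $\|\sqrt m\|_{L^2(\T^d)}^2=\int m\,dx=1$ to convert the homogeneous $H^1$ seminorm into the full $H^1$ norm.

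To obtain the $L^\gamma(Q)$ bound, I interpolate $\sqrt m\in L^\infty(0,T;L^2(\T^d))\cap L^2(0,T;H^1(\T^d))$. The Gagliardo--Nirenberg--Sobolev embedding gives $\sqrt m\in L^{2(d+2)/d}(Q)$ when $d>2$, i.e.\ $m\in L^{(d+2)/d}(Q)$, with a bound depending only on $\|\sqrt m\|_{L^\infty_t L^2_x}$ and $\|\sqrt m\|_{L^2_t H^1_x}$; both are controlled by the previous display. For $d=2$ the same interpolation yields $m\in L^\gamma(Q)$ for every $\gamma<2$. Squaring and integrating in space-time produces $\|m\|_{L^\gamma(Q)}^\gamma$, which is therefore absorbed in the right-hand side of (\ref{eq:43}).

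The main technical subtlety is justifying the manipulation $-\int \Delta m\cdot\log m\,dx = \int|\nabla m|^2/m\,dx$ and the integration by parts on the drift, since $\log m$ can degenerate where $m$ vanishes and $b=\partial_p H[m](\cdot,\nabla u)$ is only as regular as $\nabla u$. This is handled in the standard way (e.g.\ as in the references \cite{MR3195848, MR2928380} cited): regularize $m$ by $m+\varepsilon$, replace $\log m$ by $\log(m+\varepsilon)$, perform the computation, control the drift term uniformly in $\varepsilon$ via the bound (\ref{eq:42}) on $\int m|b|^2$, and pass to the limit $\varepsilon\to 0$ using Fatou on the left-hand side. The remainder of the argument is routine.
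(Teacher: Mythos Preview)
Your proposal is correct and is precisely the standard entropy argument that the paper defers to by citing \cite{MR3195848}, Lemma~2.3 and Corollary~2.4 (and \cite{MR2928380}): multiply the Fokker--Planck equation by $1+\log m$, absorb the drift via Young's inequality to get the entropy--dissipation bound, then use Gagliardo--Nirenberg interpolation on $\sqrt m\in L^\infty_t L^2_x\cap L^2_t H^1_x$ for the $L^\gamma$ estimate. One minor remark: on the unit torus, Jensen's inequality for the convex function $s\mapsto s\log s$ gives $\int_{\T^d} m\log m\,dx\ge 0$ for any probability density, which is slightly sharper than the lower bound $-C_d$ you quote; also, as stated the inequality~(\ref{eq:43}) should be read with an implicit additive constant on the right (otherwise it fails for $m_0\equiv 1$, $b\equiv 0$), but this is the usual harmless abuse in the references.
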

\begin{corollary}
For $q= \frac {d+2}{d+1}$ if $d>2$ and $q<4/3$ if $d=2$,
 there exists a constant $c>0$ such that
 \begin{equation}\label{eq:44}
   \begin{split}
  &   \|\nabla m\|_{L^q (Q)}^q+ \| \frac {\partial m}{\partial t}\|^q_{L^q (0,T; W^{-1,q} (\T^d))} \\
\le & c\left( \int_Q  m(t,x)\left |\frac{\partial H[m(t,\cdot)]}{\partial p}(x, \nabla u(t,x) ) \right|^2 dxdt  + \int_{\T^d} m_0(x)\log(m_0(x))dx \right) .
   \end{split}
 \end{equation}
\end{corollary}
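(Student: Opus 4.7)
The plan is to derive both estimates from the bounds supplied by Lemma \ref{sec:bf-step-3}, namely $\|\sqrt{m}\|_{L^2(0,T;H^1(\T^d))}^2$ and $\|m\|_{L^\gamma(Q)}^\gamma$, by an appropriate Hölder interpolation. The only real choice to make is to verify that the exponent $q=(d+2)/(d+1)$ (respectively $q<4/3$ when $d=2$) matches the constraints imposed by these two bounds.

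\textbf{Step 1: the gradient estimate.} I would use the pointwise identity $\nabla m = 2\sqrt{m}\,\nabla\sqrt{m}$, valid a.e.\ on $\{m>0\}$, to write
\begin{displaymath}
  \int_Q |\nabla m|^q \,dx\,dt \le 2^q \int_Q m^{q/2}\, |\nabla\sqrt{m}|^q \,dx\,dt.
\end{displaymath}
Applying Hölder with conjugate exponents $2/q$ and $2/(2-q)$ (admissible since $q<2$) bounds the right-hand side by
\begin{displaymath}
  2^q \Big(\int_Q |\nabla\sqrt{m}|^2\,dx\,dt\Big)^{q/2} \Big(\int_Q m^{q/(2-q)}\,dx\,dt\Big)^{(2-q)/2}.
\end{displaymath}
For the last factor to be controlled by $\|m\|_{L^\gamma(Q)}$ we need $q/(2-q)\le \gamma$, i.e.\ $q\le 2\gamma/(\gamma+1)$. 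Plugging in $\gamma=(d+2)/d$ gives exactly $q=(d+2)/(d+1)$; in dimension $d=2$ any $\gamma<2$ yields any $q<4/3$. Combining with (\ref{eq:43}) gives the bound on $\|\nabla m\|_{L^q(Q)}^q$ announced in (\ref{eq:44}).

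\textbf{Step 2: the time derivative estimate.} From the Fokker-Planck equation (\ref{eq:7}), for any test function $\varphi\in W^{1,q'}(\T^d)$,
\begin{displaymath}
  \Big\langle \frac{\partial m}{\partial t}(t,\cdot),\varphi\Big\rangle
  = -\nu \int_{\T^d} \nabla m(t,x)\cdot \nabla\varphi(x)\,dx
  + \int_{\T^d} m(t,x)\,\frac{\partial H}{\partial p}[m(t,\cdot)](x,\nabla u(t,x))\cdot \nabla\varphi(x)\,dx,
\end{displaymath}
so that
\begin{displaymath}
  \Big\|\frac{\partial m}{\partial t}(t,\cdot)\Big\|_{W^{-1,q}(\T^d)} \le \nu\|\nabla m(t,\cdot)\|_{L^q(\T^d)} + \big\|m(t,\cdot)\,\partial_p H[m](\cdot,\nabla u)\big\|_{L^q(\T^d)}.
\end{displaymath}
After raising to the power $q$ and integrating in $t$, the first term is handled by Step 1. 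For the second term I would again apply Hölder with exponents $2/q$ and $2/(2-q)$, writing $m^q|\partial_p H|^q = m^{q/2}\,(m\,|\partial_p H|^2)^{q/2}$, which yields
\begin{displaymath}
  \int_Q m^q\,|\partial_p H|^q\,dx\,dt \le \Big(\int_Q m^{q/(2-q)}\,dx\,dt\Big)^{(2-q)/2} \Big(\int_Q m\,|\partial_p H|^2\,dx\,dt\Big)^{q/2}.
\end{displaymath}
With the same choice of $q$, the first factor is bounded via $\|m\|_{L^\gamma(Q)}$ controlled in (\ref{eq:43}), and the second factor is precisely the quantity appearing on the right-hand side of (\ref{eq:44}).

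\textbf{Where the difficulty lies.} There is no real obstacle beyond bookkeeping: the lemma provides exactly the two ingredients $\|\sqrt m\|_{L^2(H^1)}$ and $\|m\|_{L^\gamma}$, and the Hölder interpolation is designed so that the scaling $q/(2-q)=\gamma$ is saturated at $q=(d+2)/(d+1)$. The mild care needed is to justify $\nabla m = 2\sqrt m\,\nabla\sqrt m$ on the zero set of $m$ (both sides vanish a.e.\ there by Stampacchia's theorem) and to work with test functions in $W^{1,q'}$ so that $\nabla\varphi\in L^{q'}$ is paired against $\nabla m,\,m\,\partial_p H\in L^q$; both are standard.
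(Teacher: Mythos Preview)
Your proof is correct and follows precisely the standard argument in the references the paper cites (Cardaliaguet et al.\ and Achdou--Capuzzo-Dolcetta): writing $\nabla m = 2\sqrt m\,\nabla\sqrt m$, applying H\"older with exponents $2/q$ and $2/(2-q)$, and matching $q/(2-q)=\gamma$ to get $q=(d+2)/(d+1)$ is exactly the intended derivation, as is the duality argument for $\partial_t m$ via the Fokker--Planck equation. The paper itself gives no proof and merely points to those references, so there is nothing further to compare.
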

From (\ref{eq:44}) and (\ref{eq:42}), we have a  uniform bound on $\| \frac {\partial m}{\partial t}\|^q_{L^q (0,T; W^{-1,q} (\T^d))}$
by a constant
depending only on $u_T$ and $m_0$.
We infer  that (\ref{eq:6}) can be written
\begin{equation}
  \label{eq:45}
\frac{\partial u} {\partial t} (t,x) + \nu \Delta u(t,x) + a(t,x )|\nabla u|^\beta(t,x)= b(t,x),
\end{equation}
where $a$ is a function which belongs to $\cC([0,T]; \cC^p(\T^d))$ for all $p\in \N$, with corresponding norms bounded by constants
depending only on $u_T$ and $m_0$. From (\ref{eq:40}), we deduce that for all $p\in \N$,
 $\|b\|_{L^1(0,T; W^{p,\infty} (\T^d))}$ is bounded by a constant
depending only on $u_T$ and $ m_0$, because
\begin{displaymath}
  \begin{split}
  \|b\|_{L^1(0,T; W^{p,\infty} (\T^d) )}&= \|G[m,\nabla u] \|_{L^1(0,T; W^{p,\infty} (\T^d))}\\
  &\le
  c\left \| \frac {m |\nabla u|^\beta}{\left(1+(\rho*m)\right)^{\alpha+1}}\right \|_{L^1(Q)}    \\
 &\le c \| m H[m](\cdot, \nabla u )\|_{L^1(Q)} .
  \end{split}
\end{displaymath}

\paragraph{\bf Step 4: uniform estimates on $|\nabla u|$}
Since $a \in \cC([0,T]; \cC^p(\T^d))$ and\\  $b\in L^1(0,T; W^{p,\infty} (\T^d))$, we can apply  Bernstein method to~(\ref{eq:45}) and
 estimate $|\nabla u|$.
By a slight modification of the proof  of Theorem 11.1 in  \cite{MR1465184},
(the only difference is that in  \cite{MR1465184}, $b$ is supposed to belong to $L^\infty(0,T; W^{p,\infty} (\T^d))$, but it can be checked that
this assumption can be weakened), we prove that there exists a constant $C_4$ which depends on
 $u_T$ and $ m_0$ such that
 \begin{equation}
   \|\nabla u \|_{L^\infty (Q)}\le C_4.
 \end{equation}
The proof adapted from \cite{MR1465184} is rather long, so we do not reproduce it here.

\paragraph{\bf Step 5: stronger a priori estimates}
Since $|\nabla u|$ is bounded, we can recover all the a priori estimates in \S~\ref{sec:priori-estimates},
except that the estimates in Step 3 and 4 of \S~\ref{sec:priori-estimates} only hold with $0<\theta< \beta-1$, in view of Remark \ref{sec:existence-theorem-1}.
We obtain that for all $\gamma\in (0,1)$, there exist two constants $C_5(\gamma)$ and $C_6(\gamma)$ such that
 $  \| m\| _{\cC^{\gamma/2, \gamma} (\bar Q)}\le C_5(\gamma)$ and $\|u\|_{\cC^{(1+\gamma)/2, 1+\gamma} (\bar Q)}\le C_5(\gamma)  $,
and that for all $\theta\in (0,\beta-1)$, there exist two constants $C_7(\theta)$ and $C_8(\theta)$ such that
 $\|m\|_{\cC^{(1+\theta)/2, 1+\theta} (\bar Q)}\le C_7 (\theta)  $ and $  \| u\| _{\cC^{1+ \theta/2,2+ \theta} (\bar Q)}\le C_8(\theta)$.

\subsubsection{The existence theorem}
\label{sec:existence-theorem-3}
\begin{theorem}\label{sec:existence-theorem-4}
We assume $(H_0)$ and  (\ref{eq:38}).
 For $\gamma$, $0<\gamma < \beta-1$,  there exists a function $u\in \cC^{1+\gamma/2,2+\gamma}(\bar Q)$ and
$m\in \cC^{(1+\gamma)/2,1+\gamma}(\bar Q) $
 which satisfy  (\ref{eq:6})-(\ref{eq:8}) with  $H$ given by (\ref{eq:37}).
\end{theorem}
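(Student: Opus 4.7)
The plan is to mimic the two-step Leray-Schauder scheme used in the proof of Theorem~\ref{sec:existence-theorem-2}, but with an additional truncation needed to handle the unbounded growth of $\partial_p H$ in $p$. Let $\chi_M:\R^d\to\R^d$ be a smooth radial retraction with $\chi_M(p)=p$ for $|p|\le M$ and $|\chi_M(p)|\le 2M$ for all $p$, and let $\eta_R:\R\to\R$ be the scalar cutoff used in the proof of Theorem~\ref{sec:existence-theorem-2}. Consider the doubly-truncated system
\begin{align*}
0 &= \partial_t u + \nu\Delta u + H[m](x,\chi_M(\nabla u)) + \eta_R\bigl(G[m,\chi_M(\nabla u)]\bigr),\\
0 &= \partial_t m - \nu\Delta m + \div\bigl(m\,\partial_p H[m](\cdot,\chi_M(\nabla u))\bigr),
\end{align*}
together with the initial and terminal conditions (\ref{eq:8}). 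All nonlinearities now have bounded gradient data, so Step A of the proof of Theorem~\ref{sec:existence-theorem-2} applies essentially verbatim: set $\psi:m\mapsto(m,u)$ via the truncated HJB, $\zeta:(m,u)\mapsto\tilde m$ via the associated linear Fokker-Planck equation, and apply the Leray-Schauder fixed-point theorem to $\chi=\zeta\circ\psi$ on a compact convex subset of $\cC^0([0,T];L^2(\T^d)\cap\PP)$ to obtain a classical solution $(u_{R,M},m_{R,M})$.

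Next I verify that the a priori estimates of Steps 1--5 in \S\ref{sec:priori-estimates-1} apply to the truncated solution with constants independent of $R$ and $M$. Step 1 relies only on the sign $\eta_R(G)\ge 0$ and is untouched. Step 2 is the delicate one: multiplying the truncated HJB by $m-\bar m_0$ and the FP by $u$ and summing produces an identity whose coercive term is $\int_Q m\,H[m](\cdot,\chi_M(\nabla u))\,dxdt$; choosing the retraction $\chi_M$ so that $p\mapsto H[m](x,\chi_M(p))$ remains radial and concave preserves the sign structure of the computation and yields (\ref{eq:40})--(\ref{eq:42}) with the very same constants, since $|H[m](x,\chi_M(p))|\le|H[m](x,p)|$. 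Steps 3--5 are then uniform in $R,M$, as the lemmas quoted from \cite{MR3195848,MR2928380,MR1465184} only use the output of Steps 1--2 and the fixed regularity of $\rho$ and $m_0$.

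The decisive moment is the conclusion of Step 4, which gives $\|\nabla u_{R,M}\|_{L^\infty(Q)}\le C_4$ with $C_4$ independent of $R$ and $M$. Fixing $M>C_4$ forces $\chi_M(\nabla u_{R,M})=\nabla u_{R,M}$ pointwise, so $(u_{R,M},m_{R,M})$ solves the untruncated HJB and FP with the original $H$ and $\partial_p H$; fixing $R$ larger than the resulting uniform $L^\infty$-bound on $G[m_{R,M},\nabla u_{R,M}]$ (which follows from Step 4 and the explicit form of $G$ together with (\ref{eq:40})) forces $\eta_R(G)=G$. Hence $(u,m):=(u_{R,M},m_{R,M})$ is a weak solution of (\ref{eq:6})-(\ref{eq:8}) with $H$ given by (\ref{eq:37}), and the Hölder estimates of Step 5 upgrade it to $u\in\cC^{1+\gamma/2,2+\gamma}(\bar Q)$ and $m\in\cC^{(1+\gamma)/2,1+\gamma}(\bar Q)$ for any $0<\gamma<\beta-1$.

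The main obstacle will be ensuring that the Bernstein argument of Step 4 survives the insertion of $\chi_M$ with constants independent of $M$. The differentiation of the truncated HJB produces cross terms involving $D\chi_M$ that threaten the positivity of the quadratic form on which the Bernstein estimate of \cite{MR1465184}, Theorem~11.1 rests, precisely in the transition zone $M\le|\nabla u|\le 2M$. Choosing $\chi_M$ as a concave radial retraction, so that the truncated Hamiltonian $p\mapsto H[m](x,\chi_M(p))$ remains jointly concave in $p$, is the natural safeguard: the additional terms then carry a favorable sign, and the Bernstein computation carries over with the same constants as in the untruncated case.
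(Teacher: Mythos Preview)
Your overall architecture—truncate, run a fixed-point argument, derive a priori estimates independent of the truncation parameters, then remove them—is exactly the paper's. The energy identity (Step~2) does survive your gradient cutoff with constants independent of $M$ and $R$: with a radial retraction $\chi_M(p)=\phi(|p|)\,p/|p|$ one checks directly that $-H[m](x,\chi_M(p))+\partial_pH[m](x,\chi_M(p))\cdot p\le(\beta-1)H[m](x,\chi_M(p))$, which is the inequality driving (\ref{eq:40}).

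The genuine gap is precisely where you flag it. Your proposed safeguard—choosing $\chi_M$ so that $p\mapsto H[m](x,\chi_M(p))$ remains concave—is incompatible with a \emph{bounded} retraction. Concavity of $-|\chi_M(p)|^\beta$ in $p$ amounts to convexity of $s\mapsto\phi(s)^\beta$; since $\phi(s)=s$ for $s\le M$ gives $(\phi^\beta)'(M)=\beta M^{\beta-1}>0$, convexity forces $(\phi^\beta)'(s)\ge\beta M^{\beta-1}$ for all $s\ge M$, hence $\phi(s)^\beta\to\infty$, contradicting $|\chi_M|\le 2M$. Without that structure the Bernstein absorption mechanism disappears in the zone where $\phi'\to 0$, and there is no reason for the gradient bound of Step~4 to be independent of $M$.

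The paper sidesteps this by truncating $H$ and $G$ directly rather than the gradient: it replaces $|p|^\beta$ by $\Psi_R(|p|)$, equal to $|p|^\beta$ for $|p|\le R$ and to the tangent line $\beta R^{\beta-1}|p|-(\beta-1)R^\beta$ for $|p|\ge R$, and caps $|q|^\beta$ by $\min(|q|^\beta,R^\beta)$ inside $G$. Then $\partial_pH_R$ is globally bounded and H\"older in $p$, so Theorem~\ref{sec:existence-theorem-2} via Remark~\ref{sec:existence-theorem-1} furnishes $(u_R,m_R)$ outright—no need to redo Leray--Schauder. The identity (\ref{eq:78}) replaces $-H+\partial_pH\cdot p=(\beta-1)H$ and drives the energy estimate with $R$-independent constants; and because the truncated HJB is still of the form $\partial_tu+\nu\Delta u+a(t,x)\Psi_R(|\nabla u|)=b$ with $\Psi_R$ convex and subquadratic uniformly in $R$, the Bernstein argument of Step~4 goes through with $R$-independent output. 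Taking $R$ large concludes. It is instructive that the extremal solution of the convexity constraint above, $\phi(s)^\beta=\beta M^{\beta-1}s-(\beta-1)M^\beta$, is precisely the paper's $\Psi_M$: the affine extension is the minimal modification preserving both bounded $\partial_pH$ and the Bernstein structure, and it is unbounded.
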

\begin{proof}
We start by suitably truncating the Hamiltonian $H$ and the map $G$: for $R>1$, define
\begin{equation}
  \label{eq:72}
H_R[m](x,p)=\left\{
  \begin{array}[c]{ll}
\ds - \frac{|p|^\beta}{ (1+(\rho*m)(x))^\alpha} \quad    &\hbox{if } |p|<R,
\\
 \ds - \frac{  \beta R^{\beta -1} |p|  + (1-\beta) R^\beta}{ (1+(\rho*m)(x))^\alpha} \quad    &\hbox{if } |p|\ge R,
  \end{array} \right.
\end{equation}
and
\begin{equation}\label{eq:73}
  G_R[m, q](x) = \alpha \left(\tilde \rho * \left(m \frac {\min (|q|^\beta,R^\beta)} {\left(1+(\rho*m)\right)^{\alpha+1}} \right)\right)(x).
\end{equation}
Note that
\begin{equation}\label{eq:78}
-H_R[m](x,p)+ \frac{\partial}{\partial p} H_R[m](x,p)\cdot p= \left\{
  \begin{array}[c]{ll}
\ds    -(\beta -1) \frac{|p|^\beta}{ (1+(\rho*m)(x))^\alpha}\quad  &\hbox{if } |p|\le R,\\
   \ds  -(\beta -1) \frac{R^\beta}{ (1+(\rho*m)(x))^\alpha}\quad  &\hbox{if } |p|\ge R.
  \end{array}
\right.
\end{equation}
Thanks to Remark~\ref{sec:existence-theorem-1}, we can use a slightly modified version of
 Theorem \ref{sec:existence-theorem-2}: for some  $\gamma$, $0<\gamma < \beta-1$,
there exists a solution $(u_R,m_R)$ of
\begin{eqnarray*}
\label{eq:74}
0=
\ds
\frac{\partial u_R} {\partial t} (t,x) + \nu \Delta u_R(t,x) + H_R[m_R(t,\cdot)]
( x, \grad u_R(t,x))+  G_R[m_R(t,\cdot),\nabla u_R(t,\cdot)](x),
\\
\label{eq:75}
	0=\ds \frac{\partial m_R} {\partial t} (t,x)  - \nu \Delta  m_R(t,x) +
 \div\Bigl( m_R(t,\cdot) \frac{\partial H_R} {\partial p} [m_R(t,\cdot)](\cdot,\grad u_R(t,\cdot)\Bigr)(x) ,
\end{eqnarray*}
with the initial and terminal conditions (\ref{eq:8}), such that   $u_R\in \cC^{1+\gamma/2,2+\gamma}(\bar Q)$ and
$m_R\in \cC^{(1+\gamma)/2,1+\gamma}(\bar Q) $. 
\\
Then it is possible to carry out the same program as in Step 1  and 2 in \S~\ref{sec:priori-estimates-1}:
 using  (\ref{eq:72})-(\ref{eq:78}), we  obtain
that there exists a constant $c$ independent of $R$ such that
\begin{displaymath}
    \begin{split}
 c\le & \ds  -\left (\beta -1  - \alpha \bar m_0 \|\rho\|_{L^1(\T^d)}\right ) \int_Q
   m_R(t,x) \frac  {|\nabla u_R(t,x)|^\beta}{(1+\rho*m_R(t,x))^{\alpha}}
   1_{\{|\nabla u_R(t,x)| <R\}} dxdt \\
& \ds - \left (\beta -1  - \alpha \bar m_0 \|\rho\|_{L^1(\T^d)}\right ) \int_Q
m_R(t,x) \frac  {R^\beta}{(1+\rho*m_R(t,x))^{\alpha}}    1_{\{|\nabla u_R(t,x)| \ge R\}} dxdt  \\
 & \ds + \int_Q \bar m_0 H_R[m_R(t, \cdot)]( x, \grad u_R (t,x)) dxdt,
\end{split}
\end{displaymath}
and this implies
 the counterpart of (\ref{eq:40}):  there exists a constant $C$ independent of $R$ such that
\begin{equation}
\label{eq:76}
\left \|  (1+m_R)    \frac {\min (|\nabla u_R|^\beta, R^\beta)}{ (1+(\rho*m_R))^\alpha} \right\|_{L^1(Q)}  \le C.
\end{equation}
From this, we obtain the counterpart of (\ref{eq:42}):
\begin{equation}
\label{eq:77}
\int_Q  (m_R(t,x)+ 1) \left |\frac{\partial H_R[m_R(t,\cdot)]}{\partial p}(x, \nabla u_R(t,x) )\right|^2dxdt
 \le C,
\end{equation}
where $C$ is a constant independent of $R$.
This estimate allows one for carrying out Steps 3 and 4 in \S~\ref{sec:priori-estimates-1} and obtaining estimates independent of $R$: in particular, the same Bernstein argument can be used, and   we obtain that there exists a constant independent of $R$ such that
$\|\nabla u_R\|_{L^\infty(Q)}\le C$. In turn, step 5 in \S~\ref{sec:priori-estimates-1} can be used and leads to estimates independent of $R$.
\\
From this, taking $R$ large enough yields the desired existence result.
\end{proof}

\section{Uniqueness}
\label{sec:uniqueness}

\subsection{Uniqueness for (\ref{eq:6})-(\ref{eq:8}): a sufficient condition}
\label{sec:suff-cond-uniq}
In what follows, we prove sufficient conditions leading to the uniqueness of a classical solution of (\ref{eq:6})-(\ref{eq:8}). For simplicity,
we still assume that the final cost does not depend on the density, i.e. that there exists a smooth function $u_T$ such that $h[m](x)=u_T(x)$.
In order to simplify the discussion, we assume that the operator $H$ depends smoothly enough  on its argument
to give sense to the calculations that follow.\\
We consider two classical solutions $(u,m)$ and $(\tilde u, \tilde m)$ of
\begin{eqnarray}
\label{eq:46}
0&=&
\begin{array}[t]{l}
\ds
\frac{\partial u} {\partial t} (t,x) + \nu \Delta u(t,x) + H[m(t,\cdot)]
( x, \grad u(t,x)) \\ \ds +
\int_{\T^d} \frac{\partial H} {\partial m}  [m(t,\cdot)]
(\xi, \grad u(t, \xi))(x) m(t,\xi) d\xi  ,
\end{array}
\\
\label{eq:47}
	0&=&\ds \frac{\partial m} {\partial t} (t,x)  - \nu \Delta  m(t,x) +
 \div\Bigl( m(t,\cdot) \frac{\partial H} {\partial p} [m(t,\cdot)](\cdot,\grad u(t,\cdot))\Bigr)(x) ,
\end{eqnarray}
and
\begin{eqnarray}
\label{eq:48}
0&=&
\begin{array}[t]{l}
\ds
\frac{\partial \tilde u} {\partial t} (t,x) + \nu \Delta \tilde u(t,x) + H[\tilde m(t,\cdot)]
( x, \grad \tilde u(t,x)) \\ \ds +
\int_{\T^d} \frac{\partial H} {\partial \tilde m}  [\tilde m(t,\cdot)]
(\xi, \grad \tilde u(t, \xi))(x) \tilde m(t,\xi) d\xi  ,
\end{array}
\\
\label{eq:49}
	0&=&\ds \frac{\partial \tilde m} {\partial t} (t,x)  - \nu \Delta  \tilde m(t,x) +
 \div\Bigl( \tilde m(t,\cdot) \frac{\partial H} {\partial p} [\tilde m(t,\cdot)](\cdot,\grad \tilde u(t,\cdot))\Bigr)(x) .
\end{eqnarray}
We subtract (\ref{eq:48}) from (\ref{eq:46}), multiply the resulting equation by $(m(t,x)-\tilde m(t,x))$, and integrate over $Q$.
Similarly, we subtract (\ref{eq:49}) from (\ref{eq:47}), multiply the resulting equation by $(u(t,x)-\tilde u(t,x))$, and integrate over $Q$.
We sum the two resulting identities:
 we obtain
\begin{equation}
  \label{eq:50}
  \begin{split}
0 =& \int_{T^d }   (u(T,x)-\tilde u(T,x))(m(T,x)-\tilde m(T,x)) dx \\ & -  \int_{T^d }(u(0,x)-\tilde u(0,x))(m(0,x)-\tilde m(0,x))\Bigr) dx \\
& + \int_{t=0}^T E [ m(t,\cdot),\nabla u(t,\cdot), \tilde m(t,\cdot), \nabla \tilde u(t,\cdot)] dt .
  \end{split}
\end{equation}
where
\begin{displaymath}
  \begin{split}
  & E[m_1,p_1,m_2, p_2] = \int_{\T^d}  (H[m_1]( x, p_1(x)) - H[m_2](x, p_2(x)) ) (m_1(x)-m_2(x)) dx \\
  & + \int_{\T^d}  (m_1(x)-m_2(x)) \int_{\T^d}    \left( \frac {\partial H}{\partial m}
 [m_1](\xi, p_1(\xi) )(x)m_1(\xi) -  \frac {\partial H}{\partial m}
 [m_2](\xi, p_2(\xi) )(x)m_2(\xi)\right) d\xi dx\\
& - \int_{\T^d}   \left(
 m_1(x) \frac {\partial}{\partial p} H[m_1]( x, p_1(x)) - m_2(x) \frac {\partial}{\partial p} H[m_2]( x, p_2(x))\right) (p_1(x)-p_2(x)) dx.
\end{split}
\end{displaymath}
Call $\delta m= m_2-m_1$ and $\delta p= p_2-p_1$   and consider the function $  e : [0,1]\to \R$ defined by
\begin{equation}
  \label{eq:51}
  \begin{array}[c]{rcll}
\ds e(\theta)&=&\ds \frac 1 \theta E[m_1,p_1, m_1+\theta \delta m, p_1 +\theta \delta p], \qquad        & \theta>0,\\
e(0)&=&0 .
  \end{array}
\end{equation}
It can be checked that $e$ is $\cC^1$ on $[0,1]$ and that its derivative is
 \begin{equation}
   \label{eq:52}
   \begin{split}
     &e'(\theta)= 2 \int_{\T^d}\int_{\T^d} \frac {\partial H}{\partial m}  [m_1 +\theta \delta m](\xi, p_1+ \theta \delta p (\xi) )(x)   \delta m(\xi) \delta m(x)\\
&+  \int_{\T^d}\int_{\T^d} \int_{\T^d}  (m_1 (\xi)  +\theta \delta m (\xi) )\frac {\partial^2 H}{\partial m \partial m}  [m_1 +\theta \delta m](\xi, p_1+\theta  \delta p (\xi) )(x)(y) \delta m(x) \delta m(y)\\
& -  \int_{\T^d}  (m_1 (x)  +\theta \delta m (x) )   \delta p (x) \cdot  D^2_{p,p} H  [m_1 +\theta \delta m] (x, p_1(x)+\theta \delta p (x)) \delta p(x).
   \end{split}
 \end{equation}
Let us introduce the functional defined on $\cC(\T^d)\times \cC(\T^d; \R^d)$ by
\begin{equation}
  \label{eq:53}
\cH [m,p]:=\int_{\T^d}  m (x)  H  [ m] (x, p(x)) dx.
\end{equation}
The second order Fr{\'e}chet derivative  of $\cH$  with respect to $m$  (respectively $p$) at $(m,p)$ is a bilinear form on $\cC(\T^d)$,
(resp. $\cC(\T^d;\R^d)$), noted $D^2_{m,m} \cH[m,p] $, (resp.    $D^2_{p,p} \cH [m,p]$) .
For all $m\in \cC(\T^d) \cap \PP$ and all $  p\in   \cC (\T^d;\R^d) $, let us define the quadratic form
$\cQ[m,p]$  on $\cC(\T^d)\times \cC(\T^d;\R^d)$  by
\begin{equation}
  \label{eq:54}
 \cQ[m,p] (\mu, \pi)=  D^2_{m,m} \cH [m, p](\mu, \mu)
- D^2_{p,p} \cH [m,p](\pi, \pi).
\end{equation}
We see that (\ref{eq:52}) can be written as follows:
\begin{equation}\label{eq:55}
  e '(\theta)= \cQ[m_1 +\theta \delta m,p_1 +\theta \delta p](\delta m, \delta p).
\end{equation}
\begin{theorem}
  \label{sec:uniqueness-1}
We assume $(H_0)$ and that $(m,x,p)\mapsto H[m](x,p)$ is $\cC^2$ on $\cC(\T^d)\times \T^d\times \R^d$.
A sufficient condition for the uniqueness of a classical solution of (\ref{eq:6})-(\ref{eq:8})
is that
\begin{enumerate}
\item for all $m\in \cC(\T^d) \cap \PP$ and all $  p\in   \cC(\T^d;\R^d) $, the quadratic form \\
$\mu\mapsto D^2_{m,m} \cH [m, p] (\mu,\mu)$ is  positive definite
\item for all $m\in \cC(\T^d) \cap \PP$ and $x\in \T^d$, the real valued function
 $p\in \R^d \mapsto H[m](x,p)$ is strictly concave.
\end{enumerate}
\end{theorem}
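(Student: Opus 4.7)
The plan is to exploit the energy identity (\ref{eq:50}), which is already derived above the theorem. The first step is to dispose of the boundary terms in (\ref{eq:50}): under $(H_0)$ the terminal condition in (\ref{eq:8}) reduces to $u(T,\cdot) = \tilde u(T,\cdot) = u_T$, and by hypothesis $m(0,\cdot) = \tilde m(0,\cdot) = m_0$. The identity collapses to
\[
\int_0^T E\bigl[m(t,\cdot), \nabla u(t,\cdot), \tilde m(t,\cdot), \nabla \tilde u(t,\cdot)\bigr]\, dt = 0,
\]
so it is enough to show that hypotheses (1)--(2) force the integrand to be pointwise nonnegative, with strict positivity unless the two solutions agree at time $t$.

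At fixed $t$, I would exploit the interpolation (\ref{eq:51})--(\ref{eq:55}): setting $\delta m = \tilde m(t,\cdot)-m(t,\cdot)$ and $\delta p = \nabla \tilde u(t,\cdot)-\nabla u(t,\cdot)$, the fundamental theorem of calculus applied to $e$ gives
\[
E = e(1) - e(0) = \int_0^1 \cQ\bigl[m + \theta \delta m,\, \nabla u + \theta \delta p\bigr](\delta m, \delta p)\, d\theta.
\]
Along this segment each interpolant $(1-\theta)m + \theta \tilde m$ is again a probability density, so conditions (1)--(2) apply verbatim. Condition (1) gives $D^2_{m,m}\cH[\,\cdot\,](\delta m,\delta m) \geq 0$. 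For the other piece, condition (2) makes $D^2_{p,p}H[m](x,p)$ negative definite, hence
\[
D^2_{p,p}\cH[m,p](\delta p,\delta p) = \int_{\TTd} m(x)\, \delta p(x)\cdot D^2_{p,p} H[m](x,p(x))\,\delta p(x)\, dx \leq 0.
\]
Since $\cQ = D^2_{m,m}\cH - D^2_{p,p}\cH$, both contributions have the same sign and $\cQ \geq 0$.

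To close, the vanishing of $\int_0^T E\, dt$ forces $\cQ \equiv 0$ a.e.\ in $(t,\theta)$. The strict positive definiteness in (1) immediately yields $\delta m \equiv 0$, whence $\tilde m = m$ on $\bar Q$ by continuity. For the gradient part, I would invoke the parabolic strong maximum principle on the Fokker-Planck equation satisfied by $m$, which, combined with the smooth positive initial datum $m_0$ from $(H_0)$, guarantees $m > 0$ on $\bar Q$; strict concavity then upgrades the weak inequality to $\delta p \equiv 0$. Thus $\nabla u = \nabla \tilde u$ as well. Subtracting the two HJB equations (\ref{eq:46}) and (\ref{eq:48})---whose nonlinear terms now depend only on the common pair $(m,\nabla u)$---reduces them to $\partial_t(u-\tilde u) + \nu \Delta(u-\tilde u) = 0$ with zero terminal condition, so $u = \tilde u$.

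The delicate point I anticipate is not the algebraic sign analysis but the bookkeeping that validates it: checking that the Fr\'echet differentiability assumptions on $H$ suffice for $e$ to be $\cC^1$ on the whole segment, justifying the integration by parts used in (\ref{eq:50}), and---most importantly---making sure the sign conventions line up so that the \emph{difference} $D^2_{m,m}\cH - D^2_{p,p}\cH$ is what genuinely appears in $E$ (rather than a sum), since the final step of the argument depends on exactly this cancellation between the HJB concavity in $p$ and the convexity hypothesis in $m$.
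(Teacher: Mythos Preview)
Your proposal is correct and follows essentially the same route as the paper: vanish the boundary terms in (\ref{eq:50}) via $(H_0)$ and the shared initial datum, use the interpolation (\ref{eq:51})--(\ref{eq:55}) to write $E$ as an integral of the nonnegative form $\cQ$, exploit positive definiteness in $m$ to force $\delta m=0$, and then combine $m>0$ (strong maximum principle) with strict concavity in $p$ to get $\nabla u=\nabla\tilde u$ and hence $u=\tilde u$. The only cosmetic difference is that after obtaining $m=\tilde m$ the paper returns to the explicit identity (\ref{eq:57}) rather than staying inside the $\cQ$-framework, but the two arguments are equivalent.
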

\begin{proof}
From  the concavity of  $p\mapsto H[m](x,p)$,  $-D^2_{p,p}\cH[m,p]$ is positive semi-definite.
Therefore, $\cQ[m,p]$ is  positive semi-definite, and $\cQ[m,p](\mu,\pi)=0$ implies that
$ D^2_{m,m} \cH [m, p](\mu, \mu)=0$ and  $
- D^2_{p,p} \cH [m,p](\pi, \pi)=0$, and therefore $\mu=0$.
\\
  From (\ref{eq:50}), two solutions $(u,m)$ and $(\tilde u, \tilde m)$ of  (\ref{eq:6})-(\ref{eq:8}) satisfy
  \begin{equation}
    \label{eq:56}
    \int_{t=0}^T E [ m(t,\cdot),\nabla u(t,\cdot), \tilde m(t,\cdot), \nabla \tilde u(t,\cdot)] dt=0,
  \end{equation}
because $\tilde m(0,\cdot)=m(0,\cdot)$ and $\tilde u(T,\cdot)=u(T,\cdot)$.\\
But, from (\ref{eq:51}) and (\ref{eq:55}),
the properties of the quadratic form $\cQ[  (1-\theta) m(t,\cdot) +\theta \tilde m(t,\cdot),    (1-\theta) \nabla u(t,\cdot) +
 \theta \nabla \tilde u (t,\cdot)]$ imply that \\
 $ \int_{t=0}^T E [ m(t,\cdot),\nabla u(t,\cdot), \tilde m(t,\cdot), \nabla \tilde u(t,\cdot)] dt >0$
if  $m\not= \tilde m$.
\\
Therefore, (\ref{eq:56}) implies that $m=\tilde m$.
Then,
\begin{equation}\label{eq:57}
\begin{split}
  &0= \\ & \int_{Q}  m(t,x)  \left(
 \frac {\partial H}{\partial p} [m(t,\cdot)]( x, \nabla u(t,x)) - \frac {\partial H}{\partial p}[m(t,\cdot)]( x, \nabla \tilde u(t,x))
\right)\cdot (\nabla  u-\nabla \tilde u) (t,x).
      \end{split}
\end{equation}
  If  $\nu>0$, then the maximum principle implies that $m(t,x)>0$ for all $t>0,x\in \T^d$. This observation, (\ref{eq:57}) and
  the strict concavity of $H$ with respect to $p$ imply that $\nabla u(t,x)= \nabla \tilde u (t,x)>0$ for all $t,x$,
which yields immediately that $u=\tilde u$ by using (\ref{eq:6}).
\end{proof}

\begin{remark}\label{sec:uniq-refeq:6-refeq:8-1}
Let us give an alternative argument which does not require the knowledge that  $m(t,x)>0$ for all $t>0,x\in \T^d$.
Such an argument may be useful in situations when $\nu=0$ or  $\nu$  is replaced in (\ref{eq:1})  by a function of $x$
 which vanishes in some regions of $\T ^d$.  The strict concavity of $H$ with respect to $p$ and  (\ref{eq:57})
yield the fact that $u=\tilde u$ in the region where $m>0$. This implies that
$G[m (t,\cdot)](x,\nabla u (t,x) )=G[m (t,\cdot)](x,\nabla \tilde u (t,x) )$: hence, for all $t$ and $x$,
\begin{displaymath}
  \begin{split}
&\frac{\partial  u} {\partial t} (t,x) + \nu \Delta u(t,x) + H[m(t,\cdot)]
( x, \grad u(t,x))  \\=&\frac{\partial \tilde u} {\partial t} (t,x) + \nu \Delta \tilde u(t,x) + H[m(t,\cdot)]
( x, \grad \tilde u(t,x))  .
  \end{split}
\end{displaymath}
We can then apply standard results on the uniqueness of the Cauchy problem with the HJB equation
$\frac{\partial  u} {\partial t} (t,x) + \nu \Delta u(t,x) + H[m(t,\cdot)]
( x, \grad u(t,x)) = g$ and obtain that $u=\tilde u$.
\end{remark}

\begin{corollary}
  \label{sec:suff-cond-uniq-2}
In the  case when  $H$ depends
 locally on $m$, i.e. \[H[m](x,p)= \tilde H(x, p, m(x)),\]
the sufficient condition in Theorem \ref{sec:uniqueness-1} is implied by
the strict concavity of $p\in \R^d \mapsto  \tilde H (x, p,m)$ for all $m>0$ and $x\in \T^d$ and
the strict convexity of the
real valued function  $m\in \R_+\mapsto  m \tilde H (x, p,m)$, for all $p\in \R^d$.
\end{corollary}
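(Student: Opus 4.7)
The plan is to reduce the two sufficient conditions of Theorem \ref{sec:uniqueness-1} to pointwise conditions on $\tilde H$ by exploiting the locality of the dependence on $m$. Once $H[m](x,p) = \tilde H(x, p, m(x))$, the functional $\cH[m,p]$ defined in (\ref{eq:53}) becomes the integral of a local density:
\begin{equation*}
\cH[m,p] = \int_{\T^d} m(x)\, \tilde H(x, p(x), m(x))\, dx = \int_{\T^d} \phi(x, p(x), m(x))\, dx,
\end{equation*}
where $\phi(x,p,m) := m \tilde H(x,p,m)$. This reduction is the heart of the argument: the nonlocal second derivative $D^2_{m,m} \cH$ collapses to a diagonal quadratic form because $\partial \tilde H/\partial m$ (in the Fréchet sense) is supported at the single point $x$.

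First I would verify condition (2) of Theorem \ref{sec:uniqueness-1}, which is immediate: for fixed $m \in \cC(\T^d) \cap \PP$ and $x \in \T^d$, the map $p \mapsto H[m](x,p) = \tilde H(x, p, m(x))$ is strictly concave by hypothesis (applied with $m = m(x) \geq 0$; we remark below on the value $m=0$).

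Next, I would compute $D^2_{m,m} \cH[m,p]$ directly by differentiating twice in $s$ the quantity $\cH[m + s\mu, p]$. Because the dependence on $m$ inside the integrand is purely pointwise, we get
\begin{equation*}
D^2_{m,m} \cH[m,p](\mu,\mu) = \int_{\T^d} \frac{\partial^2 \phi}{\partial m^2}\bigl(x, p(x), m(x)\bigr)\, \mu(x)^2\, dx.
\end{equation*}
Strict convexity of $m \mapsto m \tilde H(x,p,m) = \phi(x,p,m)$ on $\R_+$, interpreted in the smooth sense, gives $\partial^2_{mm}\phi(x,p,m) > 0$ for all admissible $(x,p,m)$. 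Hence for any nonzero $\mu \in \cC(\T^d)$ the integrand is nonnegative and strictly positive on the set $\{\mu \neq 0\}$, so $D^2_{m,m}\cH[m,p](\mu,\mu) > 0$; this is precisely condition (1) of Theorem \ref{sec:uniqueness-1}.

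The main (and only) subtlety I anticipate is matching the two hypotheses to the setting of the theorem at points where $m(x)=0$. The strict convexity is stated for $m \in \R_+$ (open half-line), while $m \in \PP$ may vanish. Two clean ways out: either apply the maximum principle (as in the end of the proof of Theorem \ref{sec:uniqueness-1}) to guarantee $m(t,x)>0$ on $(0,T] \times \T^d$ when $\nu>0$ and $m_0$ is positive, so that the strict inequality holds everywhere it is used; or else use the alternative path sketched in Remark \ref{sec:uniq-refeq:6-refeq:8-1}, where one first deduces $\mu=0$ on $\{m>0\}$ and then invokes uniqueness for the Cauchy problem associated to the HJB equation. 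Either way, the conclusion of Theorem \ref{sec:uniqueness-1} applies and yields uniqueness. No further calculation is required beyond the identification above.
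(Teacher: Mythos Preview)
Your proposal is correct and is precisely the reduction the paper has in mind: in the local case the functional $\cH[m,p]=\int_{\T^d}\phi(x,p(x),m(x))\,dx$ with $\phi(x,p,m)=m\tilde H(x,p,m)$, so $D^2_{m,m}\cH[m,p](\mu,\mu)=\int_{\T^d}\partial^2_{mm}\phi(x,p(x),m(x))\,\mu(x)^2\,dx$, and the two hypotheses of Theorem~\ref{sec:uniqueness-1} follow pointwise. The paper states the corollary without proof, treating it as an immediate consequence of this diagonalization; your handling of the boundary case $m(x)=0$ via the maximum principle (or Remark~\ref{sec:uniq-refeq:6-refeq:8-1}) is the appropriate way to close the argument.
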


\paragraph{Example}
Consider for example the Hamiltonian
\begin{equation}
  \label{eq:58}
 H[m](x,p)= \tilde H(x, p, m(x))= -\frac { |p|^\beta}{ (c+  m(x) )^\alpha} + F(m(x)),
\end{equation}
with $c>0$, $\alpha>0$, $\beta > 1$,  $F$ a smooth function defined on $\R_+$.
One can check that if $\alpha\le 1$ and $F$ is strictly convex, then uniqueness holds.\\
Such a Hamiltonian arise in a local model for  congestion, see \cite{PLL}.

\begin{remark}
  \label{sec:uniq-refeq:6-refeq:8}
The same analysis can be carried out for mean field games, see \cite{PLL}:  for example, under Assumption $(H_0)$ and
in the  case when  $H$ depends  locally on $m$, i.e. $H[m](x,p)= \tilde H(x, p, m(x))$,
a sufficient condition for the uniqueness of a classical solution of (\ref{eq:9})-(\ref{eq:11}) is that
\begin{displaymath}
  \begin{pmatrix}
 2 \frac {\partial \tilde H} {\partial m}    \left(x,p,m\right) & -   \frac {\partial } {\partial m}  \nabla_p ^T \tilde H(x,p,m)
\\
-   \frac {\partial } {\partial m}  \nabla_p \tilde H(x,p,m) & - 2D^2_{p,p} \tilde H(x,p,m)
 \end{pmatrix}
\end{displaymath}
be positive definite for all $x\in \T^d$, $m>0$ and $p\in \R^d$.
Here, we see that the sufficient condition involves the mixed partial derivatives of $\tilde H$ with respect to $m$ and $p$, which is not the case
for mean field type control. If $\tilde H$ depends separately on $p$ and $m$ as in \cite{MR2295621}, then
 $\frac {\partial } {\partial m}  \nabla_p \tilde H (x,p,m)=0$ and  the condition becomes:
$\tilde H$ is strictly concave with respect to $p$ for $m>0$ and non decreasing with respect to $m$, (or concave with respect to $p$ and strictly increasing with respect to $m$).
\end{remark}

\begin{remark}
  \label{sec:uniqueness_of_weak_solutions}
The extension of the result  on uniqueness to weak solutions is not trivial. In the context of mean filed games, one can find such results in \cite{MR3090129} and \cite{porretta2015}: roughly speaking they rely on some new uniqueness results for weak solutions of the Fokker-Planck equation and on crossed regularity lemmas, see Lemma 5 in \cite{MR3090129}.
In the context of mean field type control, the same kind of analysis has not been done yet.
\end{remark}

In the case when $n=d$, $g[m](x,v)=v$ and $ v\mapsto f[m](x,v)$ is strictly convex for all $m\in \PP$ and
$x\in \T^d$, it is well known that
$
f[m](x,v)= \sup_{q\in \R^d} \left(  H[m] (x, q) - q\cdot v \right)$.
Furthermore if $p\mapsto H[m](x,p)$ is strictly concave for all $m\in \PP$ and
$x\in \T^d$, then
\begin{equation}
 \label{eq:59}
f[m](x,v)= \max_{q\in \R^d} \left(  H[m] (x, q) - q\cdot v \right)
\end{equation}
 and the maximum is achieved by a unique
$q$. This observation leads to the following necessary condition for  the assumption of Theorem \ref{sec:uniqueness-1}
to be satisfied.
\begin{proposition}
  \label{sec:suff-cond-uniq-4}
Assume that  $n=d$, $g[m](x,v)=v$, that $ v\mapsto f[m](x,v)$ is strictly convex for all $m\in \PP$ and $x\in \T^d$,
 and that $p\mapsto H[m](x,p)$ is strictly concave for all $m\in \PP$ and $x\in \T^d$.
If for all $p \in \cC(\T ^d;\R^d)$,  $m\mapsto \cH[m,p]$ is strictly convex in  $\PP\cap \cC(\T^d)$,
then for all $v \in \cC(\T ^d;\R^d)$,
$m\mapsto \int_{\T^d} m(x) f[m](x,v(x)) dx $ is strictly convex in   $\PP\cap \cC(\T^d)$.
\end{proposition}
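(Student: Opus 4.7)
The plan is to exploit the Fenchel duality (\ref{eq:59}) to rewrite the functional $\cF[m,v] := \int_{\T^d} m(x) f[m](x,v(x))\,dx$ as a supremum of affine-plus-strictly-convex functionals in $m$, indexed by $q$, and then transfer the strict convexity of $m\mapsto \cH[m,q]$ through the usual ``sup of convex functions'' argument.

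First, I would note that under the hypotheses (strict concavity of $p\mapsto H[m](x,p)$ and smoothness of $H$) the maximum in (\ref{eq:59}) is attained at a unique $q^\ast = q^\ast[m,v](x)$ for each $(x,m,v)$; by the implicit function theorem, $q^\ast[m,v](\cdot)\in \cC(\T^d;\R^d)$ for any $v\in \cC(\T^d;\R^d)$ and $m\in \PP\cap \cC(\T^d)$. Integrating (\ref{eq:59}) against $m$ gives the identity
\begin{equation*}
\cF[m,v] = \cH[m,q^\ast[m,v]] - \int_{\T^d} m(x)\, q^\ast[m,v](x)\cdot v(x)\,dx,
\end{equation*}
and, for any fixed $q \in \cC(\T^d;\R^d)$, the pointwise inequality $f[m](x,v(x))\ge H[m](x,q(x))-q(x)\cdot v(x)$ gives, after multiplying by $m(x)\ge 0$ and integrating,
\begin{equation*}
\cF[m,v] \ge \cH[m,q] - \int_{\T^d} m(x)\, q(x)\cdot v(x)\,dx, \qquad \forall\, m\in \PP\cap\cC(\T^d).
\end{equation*}

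Now fix $m_1\neq m_2$ in $\PP\cap\cC(\T^d)$ and $\theta\in(0,1)$, and set $m_\theta := (1-\theta)m_1+\theta m_2$ and $q_\theta := q^\ast[m_\theta,v]\in \cC(\T^d;\R^d)$. Writing $\Phi_q(m) := \cH[m,q] - \int m(x)\, q(x)\cdot v(x)\,dx$, the key point is that $m\mapsto \Phi_q(m)$ is strictly convex on $\PP\cap \cC(\T^d)$: the term $\cH[\cdot,q]$ is strictly convex by assumption, and the other term is linear in $m$. Therefore, using the identity for $\cF[m_\theta,v]$ first and the inequality above for $m_1$ and $m_2$,
\begin{equation*}
\cF[m_\theta,v] = \Phi_{q_\theta}(m_\theta) < (1-\theta)\,\Phi_{q_\theta}(m_1) + \theta\, \Phi_{q_\theta}(m_2) \le (1-\theta)\,\cF[m_1,v] + \theta\, \cF[m_2,v],
\end{equation*}
which is the desired strict convexity.

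The main technical obstacle is verifying that $q_\theta$ lies in $\cC(\T^d;\R^d)$ so that the strict convexity hypothesis on $\cH[\cdot,q]$ applies directly. This is where we use the strict concavity of $p\mapsto H[m](x,p)$ together with the smoothness of $H$ in $(x,p)$: it yields uniqueness of $q^\ast$ and, via a standard implicit function argument applied to the first-order optimality condition $\nabla_p H[m_\theta](x,q^\ast) = v(x)$, its continuity in $x$. If one only had a measurable selection $q_\theta$ the argument would require a density/approximation step, so proving continuity of $x\mapsto q^\ast[m,v](x)$ is the only substantive work beyond the duality manipulation above.
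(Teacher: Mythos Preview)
Your proof is correct and follows essentially the same route as the paper's: both arguments use the Fenchel duality (\ref{eq:59}) to express $\cF[m,v]$ as a supremum of the functionals $\Phi_q(m)=\cH[m,q]-\int m\,q\cdot v$, pick the optimal $q_\theta$ at the convex combination $m_\theta$, apply the strict convexity of $m\mapsto\cH[m,q_\theta]$ to get the strict inequality, and then use $\Phi_{q_\theta}(m_i)\le\cF[m_i,v]$ to conclude. The paper is slightly terser about why $q^\ast$ is continuous (it simply invokes the continuity of $v$), whereas you spell out the implicit-function argument; otherwise the two proofs are the same.
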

\begin{proof}
  Take $\lambda_1> 0$ and $\lambda_2> 0$ such that $\lambda_1+\lambda_2=1$ and $m_1\not=m_2$ in
$ \PP\cap \cC(\T^d)$. From (\ref{eq:59}),
  \begin{displaymath}
    \begin{split}
      &      \int_{\T^d} (\lambda_1 m_1(x)+\lambda_2 m_2(x))  f[\lambda_1 m_1+\lambda_2 m_2](x,v(x)) dx\\
      =& \int_{\T^d} \max_{q\in \R ^d }  (\lambda_1 m_1(x)+\lambda_2 m_2(x))
 \left(  H[\lambda_1 m_1+\lambda_2 m_2] (x, q) - qv(x) \right) dx.
\end{split}
\end{displaymath}
If for all $x\in \T^d$, the maximum in the latter integrand  is achieved by $q^*(x)$,
 then $x\mapsto q^*(x)$ is a continuous function (from the continuity of $v$) and we have
 \begin{displaymath}
    \begin{split}
      &      \int_{\T^d} (\lambda_1 m_1(x)+\lambda_2 m_2(x))  f[\lambda_1 m_1+\lambda_2 m_2](x,v(x)) dx\\
 =&   \max_{q\in \cC(T^d;\R ^d) }
\int_{\T^d}  (\lambda_1 m_1(x)+\lambda_2 m_2(x))
 \left(  H[\lambda_1 m_1+\lambda_2 m_2] (x, q(x)) - q(x)v(x) \right) dx.
    \end{split}
  \end{displaymath}
From this and  the convexity of $m\mapsto \cH[m,p]$, we deduce that
 \begin{displaymath}
    \begin{split}
      &      \int_{\T^d} (\lambda_1 m_1(x)+\lambda_2 m_2(x))  f[\lambda_1 m_1+\lambda_2 m_2](x,v(x)) dx\\
< &  \max_{q\in \cC(T^d;\R ^d) }   \left(
  \begin{array}[c]{l}
  \lambda_1  \int_{\T^d}   \left(m_1(x)  H[ m_1] (x, q(x)) - q(x)v(x) \right) dx
+\\
  \lambda_2  \int_{\T^d}   \left(m_2(x)  H[ m_2] (x, q(x)) - q(x)v(x) \right) dx
  \end{array}
\right)
\\
\le & \lambda_1 \max_{q\in \cC(T^d;\R ^d) }   \int_{\T^d}   \left(m_1(x)  H[ m_1] (x, q(x)) - q(x)v(x) \right) dx \\
&+
  \lambda_2 \max_{q\in \cC(T^d;\R ^d) }  \int_{\T^d}   \left(m_2(x)  H[ m_2] (x, q(x)) - q(x)v(x) \right) dx
\\
= & \lambda_1 \int_{\T^d}  m_1(x) f[m_1](x,v(x)) dx + \lambda_2  \int_{\T^d}  m_2(x) f[m_2](x,v(x)) dx .
\end{split}
\end{displaymath}
\end{proof}

\subsection{Back to the control of McKean-Vlasov dynamics}
\label{sec:back-control-mckean}
As in the end of the previous paragraph, we assume that  $n=d$ and $g[m](x,v)=v$. The control of McKean-Vlasov dynamics
can be written as a control problem with linear constraints by making the change of variables $z=m v$: it consists of minimizing
\begin{equation}
  \label{eq:60}
  \tilde J(z,m_z)=\int_{Q} f[m_z(t,\cdot)]\left ( x, \frac {z(t,x)}{m_z(t,x)}\right) m_z(t,x) dx dt
+ \int_{\T^d} u_T(x) m_z(T,x) dx,
\end{equation}
subject to the linear constraints
 \begin{equation}
\label{eq:61}
\frac{\partial m_z} {\partial t} (t,x)  - \nu \Delta m_z(t,x) + \div z(t,x)=0 \;\; t\in (0,T], x\in \T^d,
 \end{equation}
with the initial condition
\begin{equation}
\label{eq:62}
m_z(0,x)= m_0(x),\quad x\in \T^d.
\end{equation}
For simplicity, we assume that $f$ depends locally on $m$, i.e.
\begin{equation*}
	 f[m]\left(x,v(x)\right)  =   \tilde f\left(x,v(x),m(x)\right), \quad \forall x \in \T^d.
\end{equation*}
We are going to look for
sufficient conditions for $(z,m)\mapsto   m \tilde f (x,\frac z m,m)$ be a convex function.
This condition will thus yield the uniqueness for the above control problem.\\
Assuming that all the following differentiations are  allowed, we see that the Hessian of the
latter function is
\begin{align*}
&\Theta(x,v,m)=
\begin{pmatrix}
 \frac{1}{m^3} z\cdot D^2_{vv}  \tilde f\left(\frac{z}{m},m\right) z &
 -\frac{1}{m^2}   z \cdot D^2_{vv}  \tilde f\left(\frac{z}{m},m\right)  \\
  -\frac{1}{m^2} D^2_{vv}  \tilde f\left(\frac{z}{m},m\right)  z & \frac{1}{m} D^2_{vv} \tilde f\left(\frac{z}{m},m\right) \\
 \end{pmatrix} \\
+
&\begin{pmatrix}
 2 \frac{\partial  \tilde f}{\partial m}   \left( \frac{z}{m},m\right) +  m \frac{\partial^2 \tilde f}{\partial m^2}  \left(\frac{z}{m},m\right)
- 2  \frac{z}{m} \cdot \frac{\partial \nabla_v  \tilde f}{\partial m} \left(\frac{z}{m},m\right)
  & \frac{\partial \nabla_v^T  \tilde f}{\partial m} \left(\frac{z}{m},m\right) \\
  \frac{\partial  \nabla_v \tilde f  }{\partial m}  \left (\frac{z}{m},m\right) & 0
 \end{pmatrix}
\end{align*}
where we have omitted the dependency on $x$ for brevity. This
 is better understood when expressed in terms of $(v,m)$:
\begin{equation}\label{eq:63}
\Theta(x,v,m)=\begin{pmatrix}
 \frac{\partial^2}{\partial m^2}  \left( m  \tilde f\left(x,v,m\right) \right) &
m\frac{\partial  \nabla_v  ^T \tilde f}{\partial m} \left(x,v,m\right) \\
  m\frac{\partial  \nabla _v \tilde f}{\partial m} \left(x,v,m\right) & m D^2_{vv}  \tilde f\left(x,v,m\right)
 \end{pmatrix}	.
\end{equation}
We have proved the following
\begin{proposition}
  \label{sec:back-control-mckean-4}
We assume that  $n=d$ and $g[m](x,v)=v$, and that
$ f[m]\left(x,v(x)\right)  =   \tilde f\left(x,v(x),m(x)\right)$,  for all $x \in \T^d$,
where  $\tilde f$ is a smooth function.
A sufficient condition for the uniqueness of a minimum $( z^*,m^*)$ such that $m^*>0$
is that $\Theta(x,v,m)$ be positive definite for all $x\in \T^d$, $m>0$ and $v\in \R^d$.
\end{proposition}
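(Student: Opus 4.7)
The plan is to exploit the fact that the change of variables $z = mv$ has converted the control problem into a minimization over an affine (hence convex) constraint set, and to deduce uniqueness from the standard principle that a strictly convex functional on a convex set has at most one minimizer.

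First I would identify the convex structure. The constraint \eqref{eq:61}-\eqref{eq:62} is linear in $(z, m_z)$ with the fixed initial datum $m_0$, so the admissible set $\cA := \{(z, m_z) : \eqref{eq:61}\text{-}\eqref{eq:62} \text{ hold}\}$ is an affine subspace of the natural function spaces, hence convex. The terminal cost $\int_{\TTd} u_T(x)\, m_z(T,x)\,dx$ is linear in $m_z$, so to establish strict convexity of $\tilde J$ on $\cA \cap \{m_z > 0\}$ it suffices to establish strict convexity of the running cost $\int_Q \Phi(x, z(t,x), m_z(t,x))\,dx\,dt$, where $\Phi(x, z, m) := m\, \tilde f(x, z/m, m)$.

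The key step is the strict convexity of $\Phi(x, \cdot, \cdot)$ on the open convex set $\{m > 0\}$ for fixed $x$. The explicit chain-rule calculation carried out just above the statement identifies the Hessian of $\Phi(x, \cdot, \cdot)$ in the $(z, m)$ variables, evaluated at $(z, m) = (vm, m)$ with $v \in \RRd$, with the matrix $\Theta(x, v, m)$ displayed in \eqref{eq:63}. Since $\Phi(x, \cdot, \cdot)$ is smooth on $\{m > 0\}$, the positive definiteness of $\Theta(x, v, m)$ for all $x \in \TTd$, $m > 0$, $v \in \RRd$ implies that $\Phi(x, \cdot, \cdot)$ is strictly convex on that open set. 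Integrating preserves strict convexity: if $(z_1, m_1), (z_2, m_2) \in \cA$ both satisfy $m_i > 0$ and differ on a positive-measure set of $Q$, then pointwise strict convexity on that set combined with pointwise convexity elsewhere yields
\[
\int_Q \Phi\!\left(x, \tfrac{z_1 + z_2}{2}, \tfrac{m_1 + m_2}{2}\right) dx\, dt < \tfrac{1}{2}\int_Q \Phi(x, z_1, m_1)\, dx\, dt + \tfrac{1}{2}\int_Q \Phi(x, z_2, m_2)\, dx\, dt.
\]
The linear terminal term adds to both sides without spoiling the inequality, and the midpoint pair lies in $\cA \cap \{m_z > 0\}$ thanks to the linearity of the constraints and the positivity of each $m_i$. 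Two distinct minimizers with $m_i^* > 0$ would therefore admit a strictly better competitor, a contradiction, and uniqueness follows.

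The principal subtlety is staying inside the open set $\{m > 0\}$, where $\Phi$ is smooth and the Hessian computation is legitimate; the positivity hypothesis $m^* > 0$ plays a double role, both in reducing strict convexity to positive definiteness of $\Theta$ and in ensuring that the convex combination of two candidate minimizers still belongs to the smooth domain so that the comparison argument applies.
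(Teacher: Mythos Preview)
Your proof is correct and follows exactly the approach of the paper: the text preceding the proposition establishes that the Hessian of $(z,m)\mapsto m\tilde f(x,z/m,m)$ is $\Theta(x,v,m)$, and the proposition is stated with the phrase ``We have proved the following,'' so the argument is precisely the convexity-plus-linear-constraint reasoning you give. If anything, your write-up is more explicit than the paper's about why integration preserves strict convexity and why the hypothesis $m^*>0$ is needed.
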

\begin{proposition}\label{sec:back-control-mckean-3}
We make  the same assumptions as in Proposition~\ref{sec:back-control-mckean-4}.
The positive definiteness of  $\Theta(x,v,m)$  for all $x\in \T^d$, $m>0$ and $v\in \R^d$
implies  the sufficient conditions on $\tilde H$ in Corollary \ref{sec:suff-cond-uniq-2}.
\end{proposition}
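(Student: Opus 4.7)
The plan is to exploit the Legendre duality between $\tilde f$ and $\tilde H$ together with the Schur complement criterion for positive definiteness of block matrices. Since $n=d$ and $g[m](x,v)=v$, the Hamiltonian takes the form
\begin{equation*}
\tilde H(x,p,m) = \min_{v \in \R^d}\bigl(\tilde f(x,v,m) + p\cdot v\bigr).
\end{equation*}
The bottom-right block of $\Theta$ in (\ref{eq:63}) is $m D^2_{vv}\tilde f$, which must be positive definite; since $m>0$ this forces $D^2_{vv}\tilde f(x,v,m)>0$, so $\tilde f$ is strictly convex in $v$ and the minimum is attained at a unique smooth minimizer $v^*(x,p,m)$ characterized by $\nabla_v \tilde f(x,v^*,m) + p = 0$. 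Differentiating this identity with respect to $p$ and $m$ yields, via the implicit function theorem,
\begin{equation*}
\partial_p v^* = -(D^2_{vv}\tilde f)^{-1}, \qquad \partial_m v^* = -(D^2_{vv}\tilde f)^{-1}\,\partial_m \nabla_v \tilde f,
\end{equation*}
where the derivatives of $\tilde f$ are evaluated at $(x,v^*,m)$.

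For the first condition of Corollary \ref{sec:suff-cond-uniq-2}, the envelope theorem gives $\nabla_p \tilde H = v^*$, so $D^2_{pp}\tilde H = \partial_p v^* = -(D^2_{vv}\tilde f)^{-1}$, which is negative definite, i.e. $p\mapsto \tilde H(x,p,m)$ is strictly concave. For the second condition, I would again use the envelope theorem to write $\partial_m \tilde H = \partial_m \tilde f(x,v^*,m)$, differentiate once more in $m$, and insert the formula for $\partial_m v^*$ to obtain
\begin{equation*}
\partial^2_{mm}\tilde H = \partial^2_{mm}\tilde f - \partial_m \nabla_v^T \tilde f \,(D^2_{vv}\tilde f)^{-1}\, \partial_m \nabla_v \tilde f.
\end{equation*}
Combined with $\partial^2_{mm}(m\tilde H) = 2\partial_m \tilde H + m\,\partial^2_{mm}\tilde H$ and $\partial^2_{mm}(m\tilde f) = 2\partial_m \tilde f + m\,\partial^2_{mm}\tilde f$, this gives
\begin{equation*}
\partial^2_{mm}(m\tilde H) = \partial^2_{mm}(m\tilde f) - m\,\partial_m \nabla_v^T \tilde f \,(D^2_{vv}\tilde f)^{-1}\, \partial_m \nabla_v \tilde f.
\end{equation*}
The key observation is that the right-hand side is exactly the Schur complement of the bottom-right block of $\Theta$ in (\ref{eq:63}): writing $\Theta = \begin{pmatrix} A & B^T \\ B & C\end{pmatrix}$ with $A = \partial^2_{mm}(m\tilde f)$, $B = m\,\partial_m \nabla_v \tilde f$, $C = m D^2_{vv}\tilde f$, one checks that $A - B^T C^{-1} B$ coincides with the displayed expression. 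By the Schur complement criterion, positive definiteness of $\Theta$ is equivalent to $C>0$ together with $A - B^T C^{-1} B > 0$, so one concludes $\partial^2_{mm}(m\tilde H)>0$, i.e. strict convexity of $m\mapsto m\tilde H(x,p,m)$ on $\R_+$.

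The main obstacle is essentially organizational rather than conceptual: one must apply the envelope theorem cleanly so that first-order cross-terms cancel at $v^*$, and one must recognize the Schur complement structure hidden in (\ref{eq:63}) — without that identification, the algebra relating $\Theta$ to the convexity properties of $\tilde H$ and $m\tilde H$ is opaque. Once those two points are in place, the proof reduces to a direct computation.
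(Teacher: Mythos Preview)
Your proof is correct and follows essentially the same approach as the paper's: both use the Legendre duality between $\tilde f$ and $\tilde H$, apply the envelope/implicit function theorem to compute $D^2_{pp}\tilde H$ and $\partial^2_{mm}\tilde H$ in terms of derivatives of $\tilde f$ at $v^*$, and then recognize $\partial^2_{mm}(m\tilde H)$ as the Schur complement of the bottom-right block of $\Theta(x,v^*,m)$. Your presentation is in fact slightly more explicit about the Schur complement identification than the paper's.
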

\begin{proof}
We observe first that the positive definiteness of $\Theta$ implies that $D_{vv}^2 \tilde f (x,v,m)$
is positive definite for all $x\in \T^d$, $m>0$ and $v\in \R^d$.
\\
Let us call $v^*\in \R^d$ the vector achieving
$ \tilde H(x,p,m)=p\cdot v^* +\tilde f(x,v^*,m)$. We know that $\nabla_p \tilde H(x,p,m)= v^*$.
 Differentiating the optimality condition for $v^*$
 with respect to $p$, we find that
 \begin{equation}\label{eq:64}
D_{p,p}^2 \tilde H(x,p,m)=- \left(D^2_{v,v} \tilde f(x,v^*,m)\right)^{-1}.
 \end{equation}
Note that (\ref{eq:64}) implies the strict concavity of
$p\mapsto \tilde H(x,p,m)$ which is the first desired condition on $\tilde H$.
 The second condition on $\tilde H$ will be a consequence of the implicit function theorem:
differentiating $\tilde H$ with respect to $m$, we find that
\begin{equation}\label{eq:65}
   \frac{\partial \tilde H}{\partial m}(x,p,m)=  \frac{\partial \tilde f}{\partial m} (x,v^*,m) +
 \nabla_v \tilde f (x,v^*,m) \cdot  \frac{\partial v^*}{\partial m}+ p  \cdot  \frac{\partial v^*}{\partial m} =
  \frac{\partial \tilde f}{\partial m} (x,v^*,m) ,
\end{equation}
 where the last identity comes from the definition of $v^*$. Differentiating once more with respect to $m$, we find that
 \begin{equation}\label{eq:66}
   \frac{\partial^2 \tilde H}{\partial m^2}(x,p,m)=  \frac{\partial^2 \tilde f}{\partial m^2}(x,v^*,m) +
 \frac{\partial \nabla_v \tilde f }{\partial m}(x,v^*,m) \cdot  \frac{\partial v^*}{\partial m}.
 \end{equation}
Then the implicit function theorem applied to the optimality condition for $v^*$ yields that
\begin{equation}\label{eq:67}
  \frac{\partial v^*}{\partial m}= - \left( D^2 _{v,v}\tilde  f (x,v^*,m)\right)^{-1} \frac{\partial \nabla_v \tilde f }{\partial m}(x,v^*,m).
\end{equation}
From (\ref{eq:64})-~(\ref{eq:67}), we see that
\begin{equation}
  \label{eq:68}
  \begin{split}
 &\frac{\partial^2  }{\partial m^2}   \left(m\tilde H(x,p,m)\right)  =
\frac{\partial^2}{\partial m^2}  \left( m  \tilde f\left(x,\cdot,m \right) \right)(v^*) \\ &
- \left (m  \frac{\partial \nabla_v \tilde f }{\partial m} (x,v^*,m) \right)\cdot
\left(m  D^2 _{v,v}\tilde  f (x,v^*,m) \right)^{-1} \left (m  \frac{\partial \nabla_v \tilde f }{\partial m} (x,v^*,m) \right)   .
  \end{split}
\end{equation}
Hence,  $\frac{\partial^2  }{\partial m^2}   \left(m\tilde H(x,p,m)\right)$
is a Schur complement of $\Theta(x,v^*, m)$. Therefore, it is positive definite and we have proved the second condition on $\tilde H$.\\
\end{proof}

\section{Numerical Simulations}
\label{sec:some-simulations}
Here we model a situation in which a crowd of pedestrians   is driven to leave a given square hall (whose side is 50 meters long) containing rectangular  obstacles:
 one can imagine for example a situation of panic in
a closed building, in which the population tries to reach the exit doors. The chosen geometry is represented on Figure~\ref{fig:1}.
\begin{figure}[htbp]
  \centering
  \includegraphics[width=3cm]{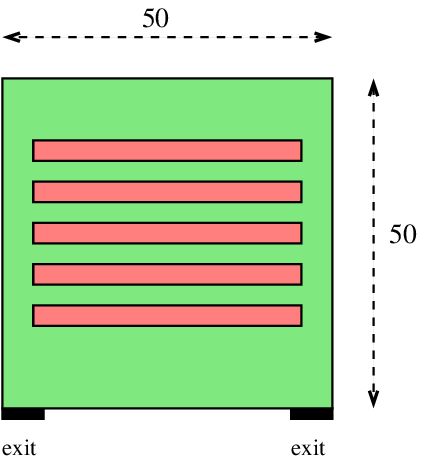}  \includegraphics[width=5cm]{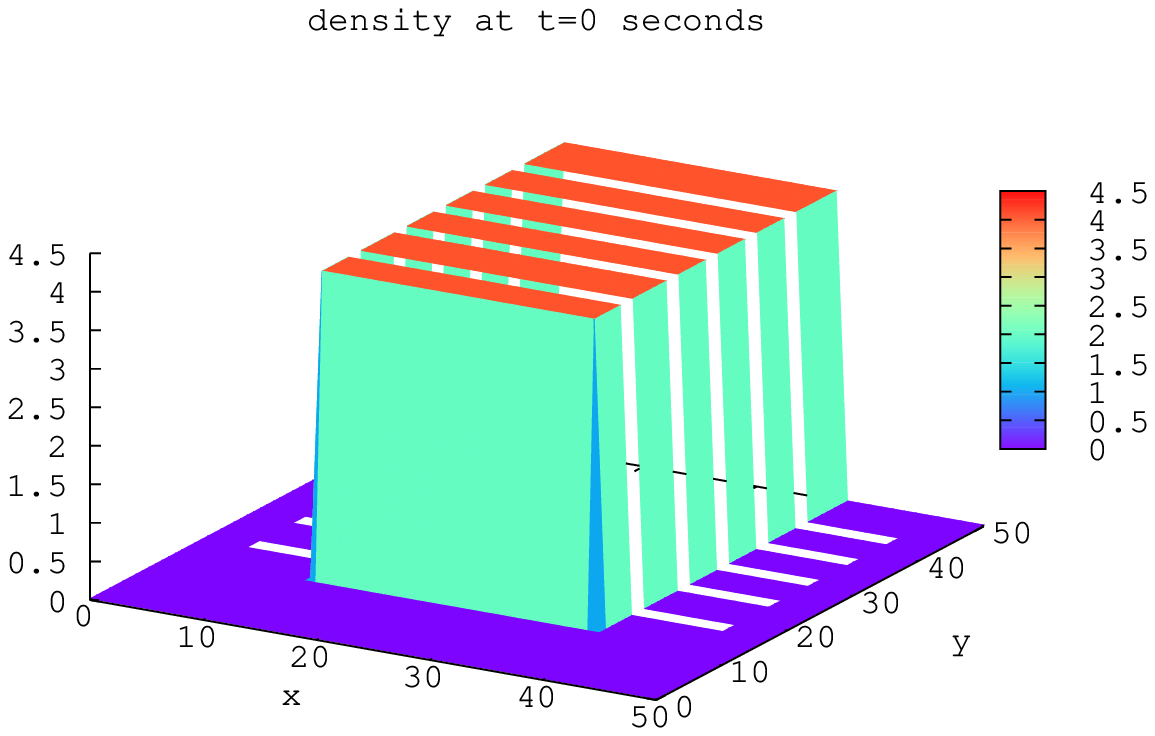}
  \caption{Left: the geometry. Right: the density at $t=0$}
  \label{fig:1}
\end{figure}
The aim is to compare the evolution of the density in two models:
\begin{enumerate}
\item Mean field games: we choose $\nu=0.012$ and  the Hamiltonian to be of the form (\ref{eq:58}), i.e. which takes congestion effects
 into account  and depends locally  on $m$; more precisely:
  \begin{displaymath}
    \tilde H(x,p,m)= -\frac {8 |p|^2} {(1+m)^{\frac 3 4}} + \frac 1 {3200}.
  \end{displaymath}
The system (\ref{eq:9})-~(\ref{eq:10}) becomes
\begin{eqnarray}
  \label{eq:69}
   \frac {\partial  u}{\partial t}  +   0.012 \;  \Delta  u
     -\frac {8} {(1+m)^{\frac 3 4}} \;   |\nabla u|^2&=&  -    \frac  1  {3200},
\\\label{eq:70}
  \frac {\partial  m}{\partial t}  -   0.012\;  \Delta m   - 16\div\left(   \frac { m \nabla u } {(1+m)^{\frac 3 4}}  \right)&=&0.
\end{eqnarray}
The horizon $T$  is $T=50$ minutes. There is no terminal cost. \\
There are two exit doors, see Figure~\ref{fig:1}. The part of the boundary corresponding to the doors is called  $\Gamma_D$. The boundary conditions at the exit doors are chosen as follows:  there is a Dirichlet condition for $u$ on $\Gamma_D$, corresponding to an exit cost; in our simulations, we have chosen $u=0$ on $\Gamma_D$.
For $m$, we may assume that $m =0$ outside the domain, so we also get the Dirichlet condition  $m=0$ on $\Gamma_D$.\\
The boundary $\Gamma_N$  corresponds to the solid walls of the hall and of the obstacles.
A natural boundary condition for $u$ on $\Gamma_N$ is a homogeneous Neumann boundary condition, i.e.
$ \frac{\partial u}{\partial n}=0$ which says that the velocity of the pedestrians is tangential to the walls.
 The natural condition for the density $m$ is that
$\nu \frac {\partial  m}{\partial n}+m     \frac {\partial \tilde H} {\partial p} (\cdot, \nabla  u,m )\cdot n=0$, therefore $ \frac {\partial  m}{\partial n}=0$
on $\Gamma_N$ .
\item Mean field type control:  this is the situation where pedestrians or robots use the same feedback law (we may imagine that they follow the strategy  decided by a leader); we keep the same Hamiltonian, and the HJB equation becomes
  \begin{equation}
    \label{eq:71}
   \frac {\partial  u}{\partial t}  +   0.012 \;  \Delta  u   -
     \left( \frac {2} {(1+m)^{\frac 3 4}}    +   \frac {6} {(1+m)^{\frac 7 4}}\right) \;   |\nabla u|^2= -     \frac  1  {3200}.
  \end{equation}
while (\ref{eq:70}) and the boundary condition are unchanged.
\end{enumerate}
The initial density $m_0$ is piecewise constant and takes two values $0$ and $4$ people/m$^2$, see Figure \ref{fig:1}.
At $t=0$, there are 3300 people in the hall.\\
We use the finite difference method originally proposed in \cite{MR2679575},
see \cite{MR3135339} for some details on the implementation and \cite{MR3097034} for convergence results. \\
On Figure \ref{fig:2}, we plot the density $m$ obtained by the simulations for the two models, at $t=1$, $2$, $5$ and $15$ minutes.
With both models, we see that the pedestrians rush towards the narrow corridors leading to the exits, at the left and right sides of the hall, and that the density reaches high values
at the intersections of corridors; then congestion effects explain why the velocity is low (the gradient of $u$)
in the regions where the density is high. On the figure, we see that the mean field type control leads to a slower exit of the hall,
 with lower peaks of density.
\begin{figure}[c]
  \begin{displaymath}
   \begin{array}[c]{cc}
  \!\!\!\!\!\!\!\!\! \includegraphics[width=5cm]{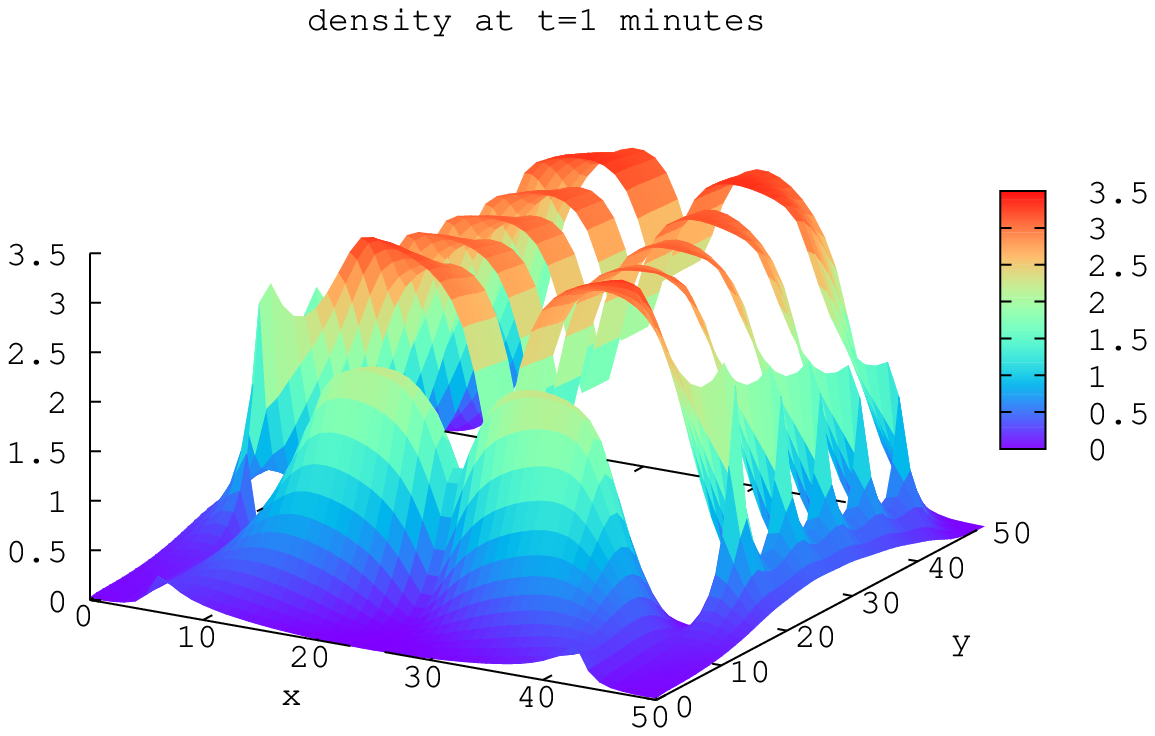}   \quad  & \includegraphics[width=5cm]{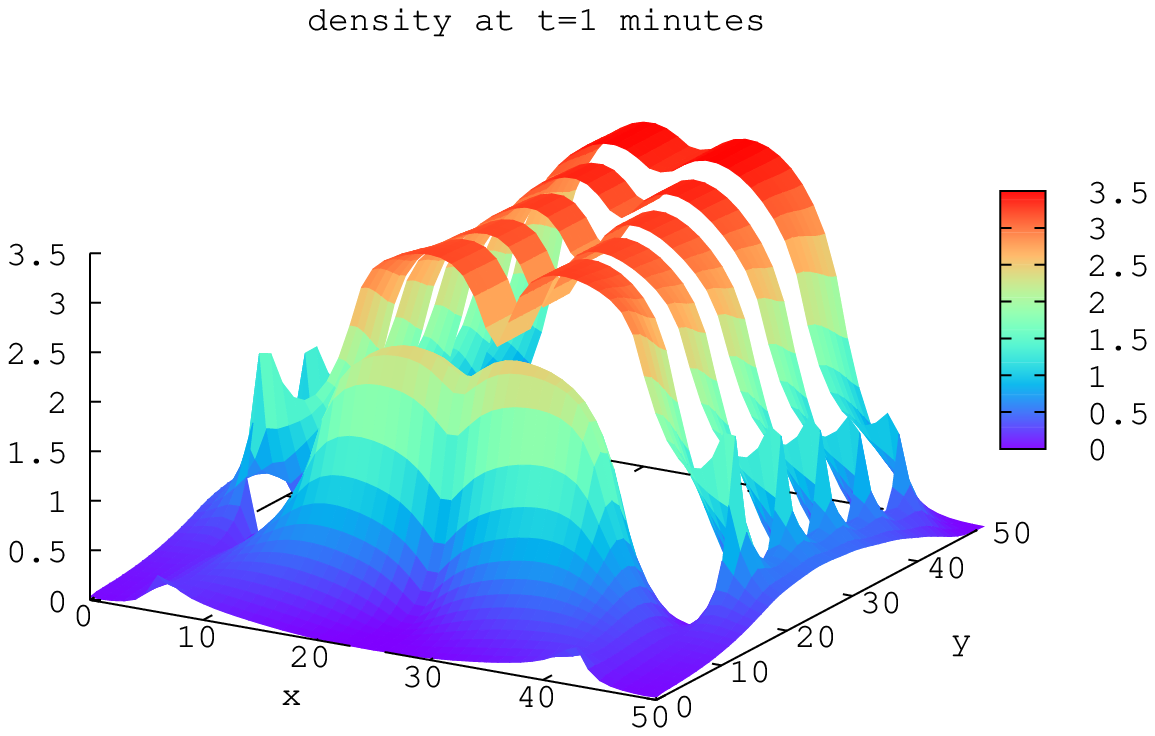} \\
  \!\!\!\!\!\!\!\!\! \includegraphics[width=5cm]{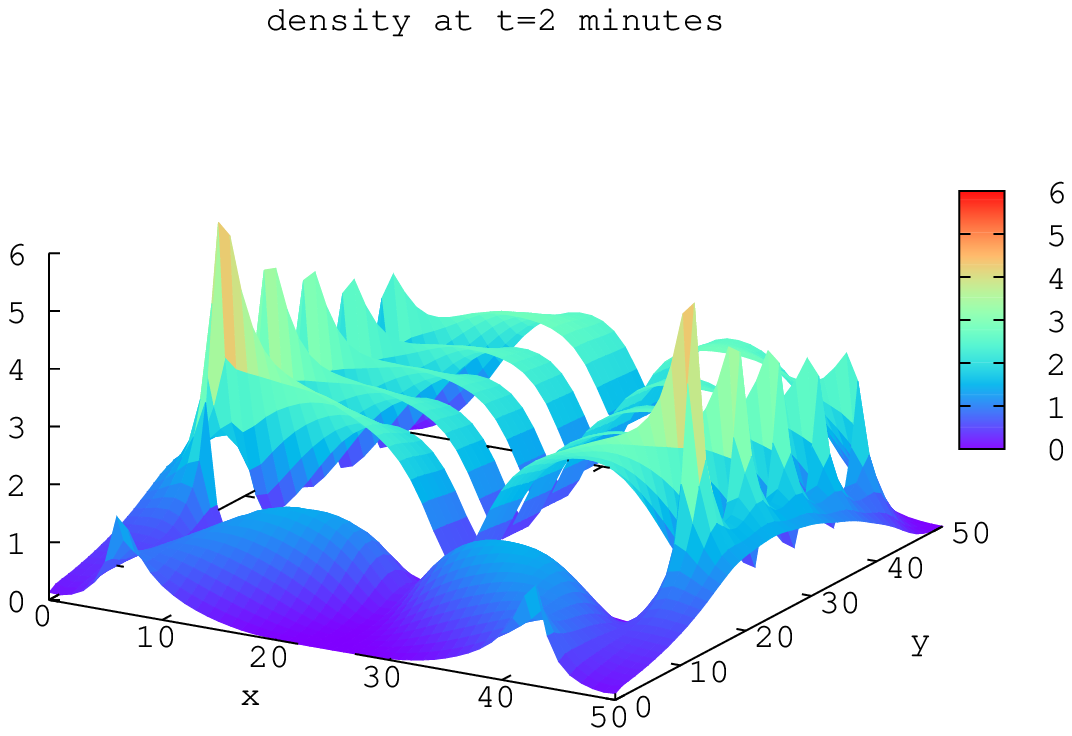}   \quad  & \includegraphics[width=5cm]{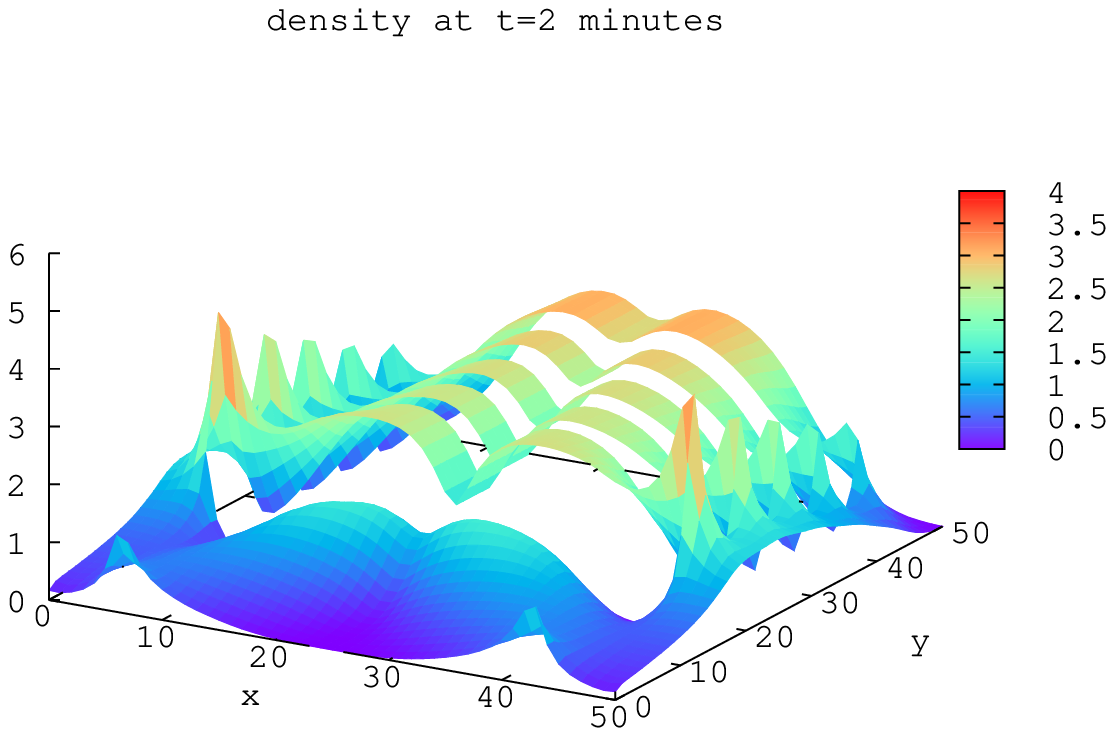} \\
  \!\!\!\!\!\!\!\!\! \includegraphics[width=5cm]{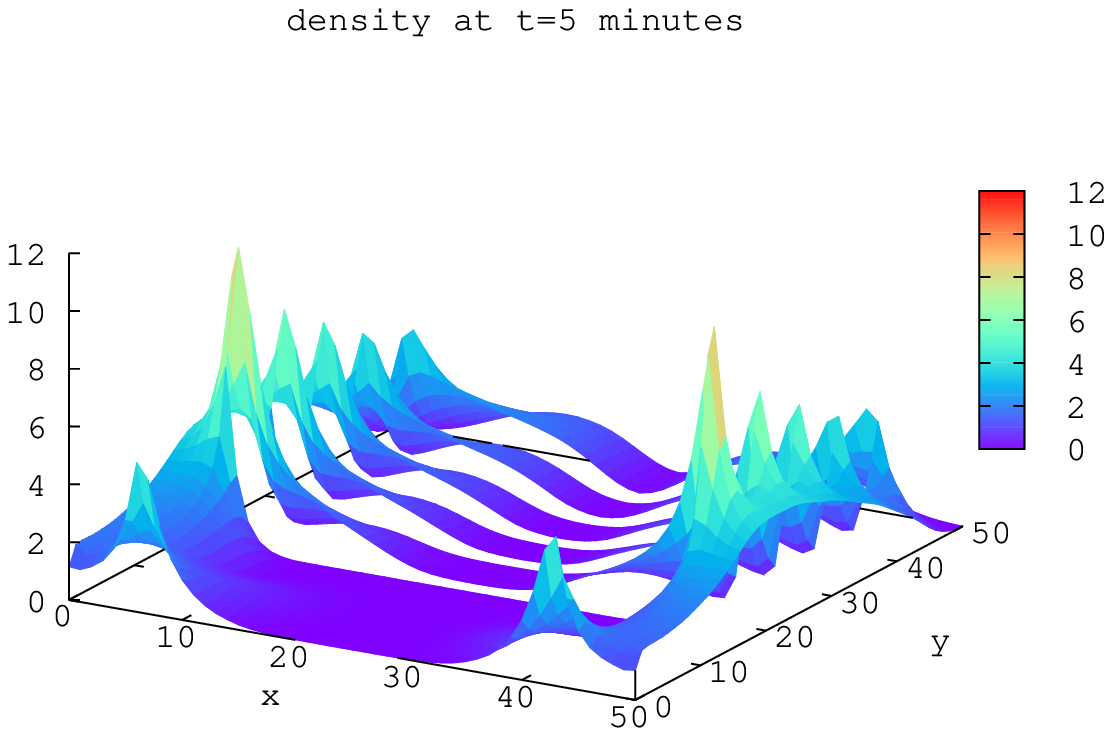}   \quad  &
\includegraphics[width=5cm]{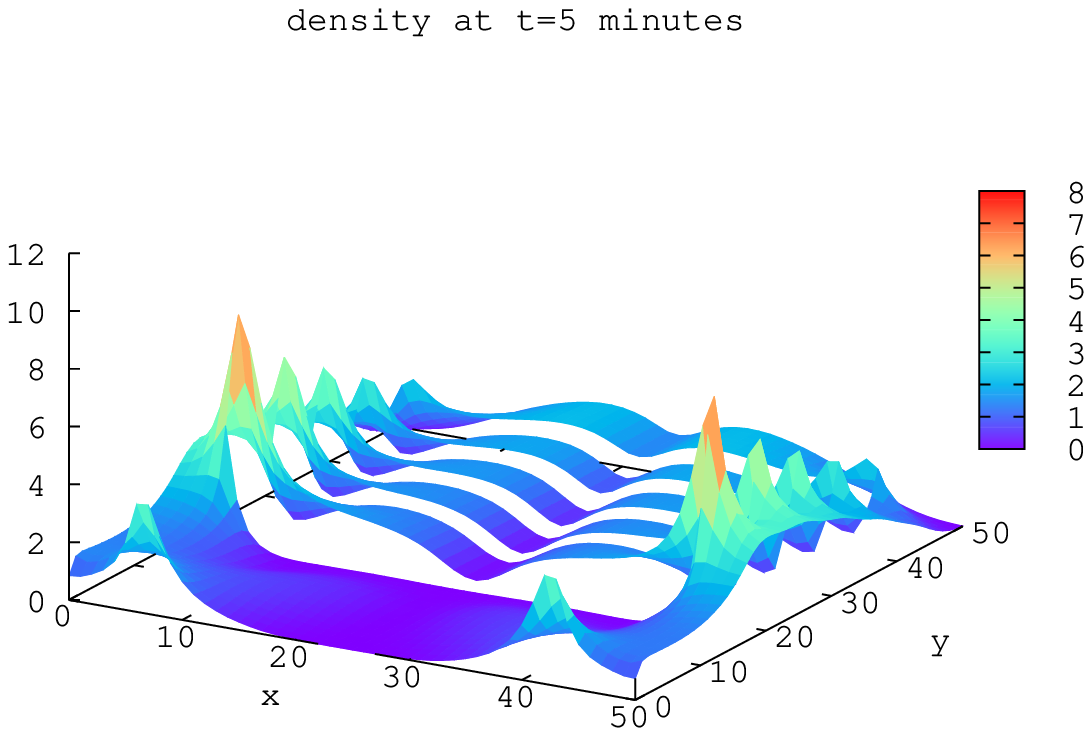} \\
  \!\!\!\!\!\!\!\!\! \includegraphics[width=5cm]{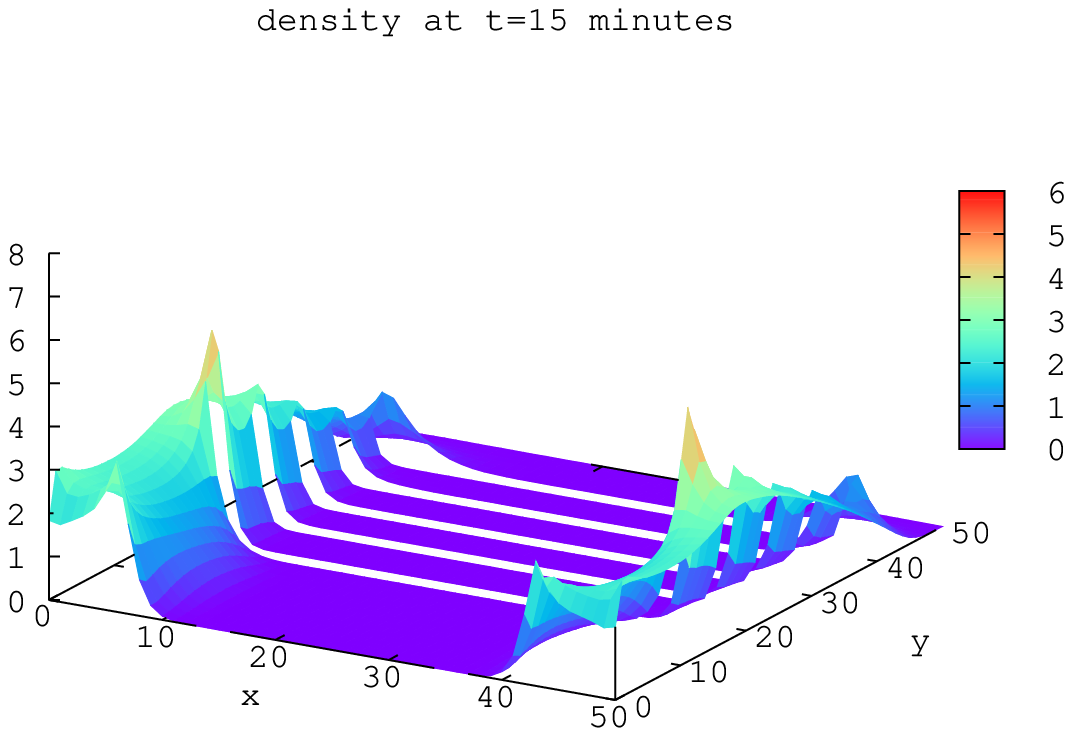}   \quad  & \includegraphics[width=5cm]{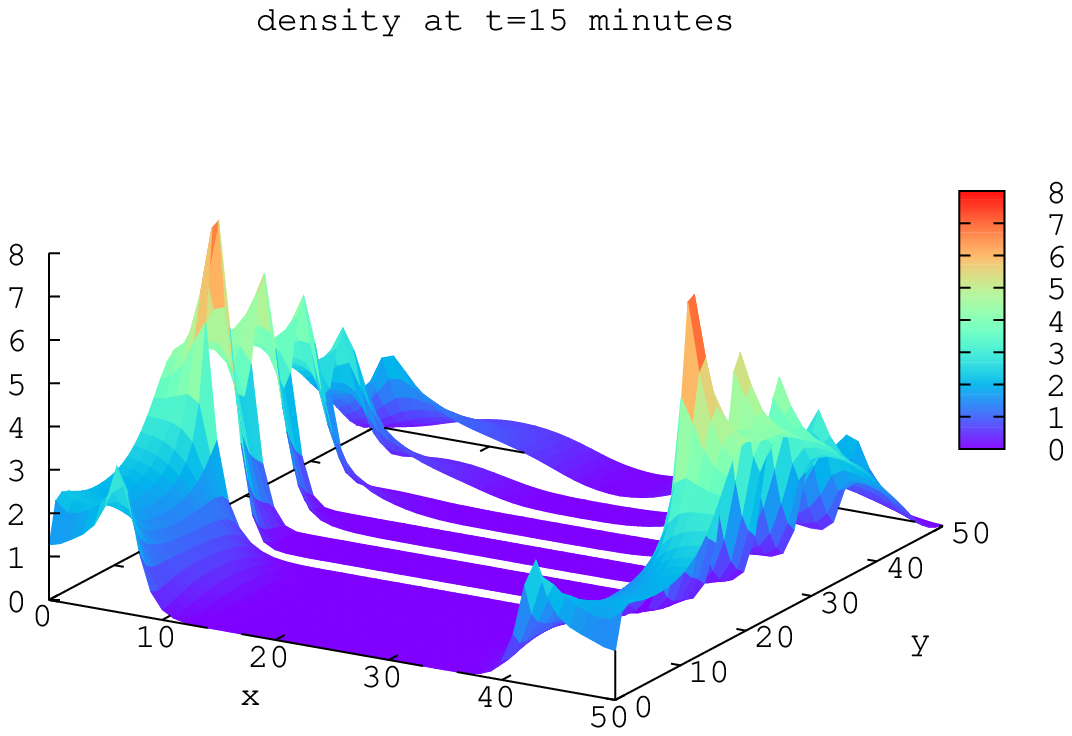} \end{array}
\end{displaymath}
  \caption{The density computed with the two models at different dates. Left: Mean field game. Right: Mean field type control.
The scales vary from one date to the other}
  \label{fig:2}
\end{figure}

\section*{\bf Acknowledgements}
We warmly thank A. Bensoussan for helpful  discussions.
 The first author  was partially funded  by the ANR projects ANR-12-MONU-0013 and ANR-12-BS01-0008-01.
The second author was partially funded by the Research Grants Council of HKSAR (CityU 500113).

\bibliographystyle{amsplain}
\bibliography{MFTC}

\end{document}